\newlength{\hchng}
\newlength{\vchng}
\newtheorem{thm}{Theorem}[section]
\newtheorem{prop}[thm]{Proposition}
\newtheorem{lemma}[thm]{Lemma}
\newtheorem{preremark}[thm]{Remark}
\newenvironment{remark}{\begin{preremark}\rm}{\medskip \end{preremark}}
\numberwithin{equation}{section}
\newcommand{\re}[1]{(\ref{#1})}
\newcommand{\begeqa}{\begin{eqnarray}}
\newcommand{\eneqa}{\end{eqnarray}}
\newcommand{\begeqaet}{\begin{eqnarray*}}
\newcommand{\eneqaet}{\end{eqnarray*}}
\newcommand{\beeq}{\begin{equation}}
\newcommand{\eeq}{\end{equation}}
\newcommand{\beeqs}{\begin{equation*}}
\newcommand{\eeqs}{\end{equation*}}
\newcommand{\norm}[1]{\left\Vert#1\right\Vert}
\newcommand{\abs}[1]{\left\vert#1\right\vert}
\newcommand{\R}{\mathbb R}
\newcommand{\eps}{\varepsilon}
\newcommand{\grad} {\nabla}
\newcommand{\dist} {\mathrm{dist}}
\DeclareMathOperator*{\osc}{osc}
\title{Boundary regularity for viscosity solutions of fully nonlinear elliptic equations}
\author{Luis Silvestre and Boyan Sirakov}
\begin{document}
\maketitle

\begin{abstract} We provide regularity results at  the boundary for continuous viscosity solutions to nonconvex fully nonlinear uniformly elliptic equations and inequalities in Euclidian domains. We show that (i) any solution of two sided inequalities with Pucci extremal operators is $C^{1,\alpha}$ on the boundary; (ii) the solution of the Dirichlet problem for fully nonlinear uniformly elliptic equations is $C^{2,\alpha}$ on the boundary; (iii) corresponding asymptotic expansions hold. This is an extension to viscosity solutions of the classical Krylov estimates for smooth solutions. 
\end{abstract}

\section{Introduction}

In this work we study the boundary regularity of continuous viscosity solutions of fully nonlinear elliptic equations and inequalities such as
\begin{equation}\label{princeq}
(S)\qquad F(D^2 u, Du, x) = f(x)
\end{equation}
in a bounded domain $\Omega\subset\R^d$, with a Dirichlet boundary condition on a part of the boundary~$\partial \Omega$. All functions considered in the paper will be assumed continuous in $\overline{\Omega}$.  Standing structure hypotheses on the operator $F$ will be its  uniform ellipticity and Lipschitz continuity in the derivatives of $u$:
\begin{itemize}
 \item[(H1)] there exist numbers $\Lambda\ge\lambda>0$, $K\ge0$,  such that for any $x\in \overline \Omega$, $M,N\in \S_d$, $p,q\in \R^n$,
     \beeq\label{elip}
 \mathcal{M}_{\lambda, \Lambda}^+(M-N) +K|p-q| \ge  F(M,p,x)-F(N,q,x)\ge \mathcal{M}_{\lambda, \Lambda}^-(M-N)-K|p-q|.
     \eeq
 \end{itemize}
We denote with $\mathcal{M}_{\lambda, \Lambda}^\pm(M)$  the extremal Pucci operators. We set $L:=\sup_\Omega f$ and assume $F(0,0,x)= 0$, which amounts to a change of $f(x)$.

We will also consider the larger set of functions which satisfy in the viscosity sense the set of inequalities
\begin{equation}\label{princineq} (S^*)\;
\left\{\begin{aligned}
M^+_{\lambda, \Lambda}(D^2 u) + K |\grad u|   &\geq -L  \\
M^-_{\lambda, \Lambda}(D^2 u) - K |\grad u|   &\leq L
\end{aligned}
\right.\qquad \mbox{in }\; \Omega.
\end{equation}

A natural concept of weak solution for fully nonlinear equations is that of a viscosity solution (standard references on the general theory of viscosity solutions include \cite{UG}, \cite{caffarelli1995fully}). We denote the above problems with $(S)$ and $(S^*)$ in order to use the same notation as in \cite{caffarelli1995fully}.

Viscosity solutions  are a priori only continuous functions, so it is clearly a fundamental problem to understand whether and when a viscosity solution has some smoothness. A {\it regularity result} starts from a merely continuous solution and shows that the function is in fact more regular (for example, belongs to $C^{\alpha}$, $C^{1,\alpha}$ or $C^{2,\alpha}$). This must not be confused with an {\it a priori estimate}, in which one assumes from the beginning that the solution is classical, and only proves an estimate on the size of some norm. The a priori estimates are technically easier to prove because one can make computations with derivatives  of the solution  without worrying about their existence and continuity. A regularity result is practically always accompanied by an a priori estimate, but not necessarily the other way around.

Boundary a priori estimates for solutions to fully nonlinear elliptic equations were first proved by Krylov in  \cite{Kr1}, who thus upgraded his and Evans' interior $C^{2,\alpha}$-estimates for convex fully nonlinear equations to global estimates. More references will be given below.

In this paper we prove some {\it boundary regularity results} for viscosity solutions, in situations when these solutions do not have the same regularity in the interior of the domain. We stress that all the estimates we prove are known (at least to the experts) if the solution is a priori assumed to be globally smooth. Due to this, one may expect that the corresponding results for viscosity solutions can be obtained by direct extension to viscosity solutions of the known techniques. It turns out however that some difficulties specific to viscosity solutions arise, and workarounds become necessary. These will be discussed in more detail below.

Before stating the main theorems, we make several simple observations on the relation between $(S)$ and $(S^*)$. Obviously if $u$ satisfies $(S)$ then it satisfies $(S^*)$. The converse is true  if $u$ is a classical solution of $(S^*)$, in the sense that  there exists a linear operator $F$ (depending on $u$ and not necessarily continuous in $x$) satisfying (H1) such that $u$ is a solution of $(S)$. However, in general viscosity solutions of $(S^*)$ are not solutions of a uniformly elliptic equation in the form $(S)$. An important  observation is that under (H1) each partial derivative of a $C^1$-smooth solution of $F(D^2u,Du)=0$ is a viscosity solution of $(S^*)$, by the stability properties of viscosity solutions with respect to uniform convergence.

Our first theorem concerns the boundary $C^{1,\alpha}$-regularity of solutions of $(S^*)$. In the sequel we assume that $0\in \partial \Omega$, and denote $\Omega^+_R= \Omega\cap B_R$, $\Omega^0_R = \partial \Omega \cap B_R$, where $B_R=B_R(0)$ is the ball centered at $0$ with radius $R$.

\begin{thm} \label{t:boundaryharnack}
Suppose (H1) holds, $\Omega$ is a $C^{2}$-domain and $u$ is a viscosity solution to \re{princineq} such that the restriction $g=u|_{\partial \Omega}\in C^{1,\overline \alpha}(\Omega^0_1)$, for some $\overline \alpha>0$.
Then there exists a function $G \in C^{\alpha}(\Omega^0_{1/2},\R^d)$, the "gradient" of $u$ on $\partial\Omega$,  such that
\begin{equation} \label{e:targetC1a0}
\|G\|_{C^{\alpha}(\Omega^0_{1/2})} \leq CW,
\end{equation}
and for every $x  \in \Omega^+_1$ and every $x_0\in \Omega^0_{1/2}$ we have
\begin{equation} \label{e:targetC1a}  |u(x) - u(x_0)-  G(x_0) \cdot (x-x_0)  | \leq C W |x-x_0|^{1+\alpha},
\end{equation}
where
$$
W:=\|u\|_{L^\infty(\Omega^+_1)}+L+\|g\|_{C^{1+\alpha}(\Omega^0_1)}.
$$
Here $\alpha=\alpha(d, \lambda, \Lambda)\in (0,\overline \alpha)$;  $C$  depends on $d$, $\lambda$, $\Lambda$, $K$, and the maximal curvature of $\Omega$.
\end{thm}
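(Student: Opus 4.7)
The scheme is the classical boundary Krylov iteration: flatten the boundary, subtract off the tangential affine part of the boundary data at each boundary point, and iterate a one-step improvement-of-flatness lemma at dyadic scales. The point will be to implement this using only tools available for viscosity solutions.

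\textbf{Step 1 (flattening and reduction to small boundary data).} Since $\partial\Omega$ is $C^2$ near $0$, a standard diffeomorphism $\Phi$ maps a neighborhood of $0$ in $\Omega$ onto a subset of the half-ball $B_r^+$. Because a $C^2$ change of variables produces $D^2 u(x)=D\Phi^T D^2\tilde u\, D\Phi + O(|\grad\tilde u|)$, the pulled-back function $\tilde u=u\circ\Phi^{-1}$ still satisfies inequalities of the form $(S^*)$, with constants depending only on $\lambda,\Lambda,K$ and the curvature bound of $\partial\Omega$ (the extra first-order term is absorbed into $K|\grad\tilde u|$, and $L$ grows by a controlled factor). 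We may thus assume $\Omega^+_1=B_1^+$ and $\Omega^0_1=B_1\cap\{x_d=0\}$. Next, fix $x_0\in\Omega^0_{1/2}$ and let $\ell_{x_0}(x):=g(x_0)+\grad_T g(x_0)\cdot(x-x_0)$ be the tangential affine part of $g$ at $x_0$. Since affine Hessians vanish and the first-order term is absorbable, $v:=u-\ell_{x_0}$ again satisfies $(S^*)$ with $L$ replaced by $L+K|\grad\ell_{x_0}|\le CW$. The $C^{1,\overline\alpha}$ regularity of $g$ then yields $|v|\le[g]_{C^{1,\overline\alpha}}r^{1+\overline\alpha}$ on $\{x_d=0\}\cap B_r(x_0)$, which is the quantitative smallness of the boundary data at scale $r$ that will feed the iteration.

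\textbf{Step 2 (improvement-of-flatness lemma, the technical core).} Fix $\alpha\in(0,\overline\alpha)$ to be chosen small. One establishes: there exist $\rho\in(0,1)$ and $\eps_0>0$ depending only on $d,\lambda,\Lambda,K$ such that if $v$ satisfies $(S^*)$ in $B_1^+$ with $\sup_{B_1^+}|v|\le 1$, $|v|\le\eps_0$ on $B_1\cap\{x_d=0\}$ and $L\le\eps_0$, then there exists $a\in\R$ with $|a|\le C$ and
\beeq
\sup_{B_\rho^+}|v(x)-a x_d|\;\le\;\rho^{1+\alpha}.
\eeq
With $a$ chosen as a normalized value of $v$ along the inward normal (e.g.\ $a=v(\rho e_d/2)/(\rho/2)$), the proof proceeds by comparing $v$ with sliding barriers of the form $a x_d \pm C_1|x|^{1+\alpha}$, which are $C^{1,1}$ and are classical sub/supersolutions of the Pucci inequalities provided $\alpha$ is small. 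Combined with the Krylov--Safonov boundary $C^\alpha$-estimate for viscosity solutions of $(S^*)$ (available via ABP and standard barrier methods), the comparison principle delivers the desired bound.

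\textbf{Step 3 (iteration and H\"older continuity of $G$).} Apply Step 2 recursively to the rescalings $v_k(x):=\rho^{-k(1+\alpha)}\bigl(v(\rho^k x)-a_{k-1}\rho^k x_d\bigr)$; the choice $\alpha<\overline\alpha$ ensures the boundary-data smallness hypothesis is preserved at every scale (the remainder $g-\ell_{x_0}$ at scale $\rho^k$ is of order $\rho^{k(1+\overline\alpha)}\ll \rho^{k(1+\alpha)}$). This produces a Cauchy sequence $a_k\to a_\infty(x_0)$ with $|a_{k+1}-a_k|\le CW\rho^{k\alpha}$, hence
\beeq
|u(x)-\ell_{x_0}(x)-a_\infty(x_0)(x-x_0)_d|\;\le\;CW|x-x_0|^{1+\alpha}.
\eeq
Setting $G(x_0):=\grad_T g(x_0)+a_\infty(x_0)e_d$ gives \re{e:targetC1a}. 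For \re{e:targetC1a0}, compare the above expansion at two nearby points $x_0,x_0'\in\Omega^0_{1/2}$ on the common ball of radius $|x_0-x_0'|$; the triangle inequality then gives $|G(x_0)-G(x_0')|\le CW|x_0-x_0'|^\alpha$. The main obstacle throughout is Step 2: in the classical (Krylov) setting one manipulates the quotient $v/x_d$ as a subsolution of an elliptic problem up to the boundary, but for a merely continuous viscosity $v$ this quotient is not a priori a viscosity sub/supersolution of anything tractable. The workaround must be purely comparison-based, using barriers that are simultaneously classical Pucci sub/supersolutions, compatible with the flat boundary condition, and rich enough to detect the affine normal correction $a x_d$; this is where the viscosity-specific difficulties foreshadowed in the introduction appear, and it also accounts for why $\alpha$ is forced strictly below $\overline\alpha$ (and below a dimensional threshold).
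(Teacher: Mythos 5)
Your overall scheme (flatten, subtract the tangential affine part of $g$, extract a normal slope $a_\infty(x_0)$ by a dyadic iteration, recover H\"older continuity of $G$ by comparing expansions at nearby boundary points) is indeed the correct skeleton and matches the paper's. The gap is concentrated in Step~2, the one-step improvement-of-flatness lemma with small but \emph{nonzero} boundary data. You propose to prove it by comparing $v$ with barriers $a x_d \pm C_1|x|^{1+\alpha}$, claiming these are classical Pucci sub/supersolutions. They are not: $|x|^{1+\alpha}$ is convex, its Hessian eigenvalues are $(1+\alpha)\alpha|x|^{\alpha-1}$ and $(1+\alpha)|x|^{\alpha-1}$, both positive and blowing up at the origin, so $\mathcal M^+_{\lambda,\Lambda}\big(D^2(a x_d + C_1|x|^{1+\alpha})\big) = C_1\Lambda(1+\alpha)(\alpha+d-1)|x|^{\alpha-1}>0$, which is the wrong sign for an upper barrier (a supersolution needs $\mathcal M^+ + K|\nabla\cdot|\le -L$). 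The paired barrier $a x_d - C_1|x|^{1+\alpha}$ does have the right sign but lies strictly below $v\approx 0$ on $\{x_d=0\}$, so the comparison never even starts. No adjustment of constants fixes this; the barrier family simply cannot produce a one-step improvement toward $a x_d$.

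This is exactly the ``lack of splitting'' obstruction discussed in the introduction. The paper handles it differently, in two pieces: (i) for zero boundary data, Lemma~\ref{l:flatboundaryharnack} builds nested linear bounds $V_k x_d\le u\le U_k x_d$ at dyadic scales, where the improvement $U_{k+1}-V_{k+1}\le(1-c_0/8)(U_k-V_k)$ comes from a quantitative Hopf lemma (Proposition~\ref{p:hopf}) whose barrier is of the form $c\,|y-z_0|^{-p}$ with $z_0$ displaced \emph{off} the flat boundary --- a completely different (and sign-correct) construction from yours; (ii) for small nonzero boundary data, Lemma~\ref{l:boundaryperturbation} does \emph{not} use a barrier at all, but a compactness argument: if the conclusion failed for a sequence $u_k$ with $\|u_k\|_{L^\infty(B_1^0)}\to 0$, the global H\"older estimate (Proposition~\ref{p:holder}) plus Arzel\`a--Ascoli yields a limit $u_\infty$ vanishing on $B^0_{3/4}$, for which (i) applies, a contradiction. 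Your Step~3 iteration is then essentially the paper's Theorem~\ref{t:c1alpha-dirichlet}, but it relies on having a working version of Step~2. As written, Step~2 is not merely incomplete, it proposes a barrier that is structurally unable to close the argument; you would need to replace it either with the paper's compactness argument or with a genuinely different barrier scheme (e.g.\ one built, as in the Hopf lemma, around points interior to $B_1^+$).
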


The second theorem concerns the boundary $C^{2,\alpha}$-regularity of solutions of $(S)$. We need to assume that $F$ is H\"older continuous in $x$, in the following sense
\begin{itemize}
 \item[(H2)] there exist $\overline \alpha, \overline C>0$ such that for all $M\in \S_d ,p\in \R^d$, $x,y\in \overline \Omega$,
\[|F(M,p,x) - F(M,p,y)| \leq \overline C |x-y|^{\overline \alpha} (|M|+|p|).\]
\end{itemize}

Note that (H1)-(H2)  imply that the solutions of $(S)$ have H\"older continuous gradients in~$\overline \Omega$, see Theorem \ref{t:regulglobal} below.

\begin{thm} \label{t:boundaryfullynonlinear}
Suppose (H1)-(H2) hold, $\Omega$ is a $C^{2,\overline \alpha}$-domain, and $f\in C^{\overline \alpha}(\Omega)$. Let $u$ be a viscosity solution to \re{princeq} such that the restriction $g=u|_{\partial \Omega}\in C^{2,\overline \alpha}(\Omega^0_1)$.
Then there exists a function  $H\in C^\alpha(\Omega^0_{1/2},\R^{d \times d})$, the  "Hessian" of $u$ on $\partial\Omega$, such that
\begin{equation} \label{e:targetC2a1}F(H(x_0), D u (x_0), x_0) = f(x_0) \quad\mbox{ for each } \; x_0 \in \Omega^0_{1/2},\qquad \|H\|_{C^\alpha(\Omega^0_{1/2})} \leq  CW,
\end{equation}
and  for every $x \in \Omega^+_1$  and every $x_0 \in \Omega^0_{1/2}$ we have
\begin{equation} \label{e:targetC2a} |u(x) - u(x_0) - D u(x_0)  \cdot (x-x_0) -\frac{1}{2}
H(x_0) (x-x_0) \cdot (x-x_0) | \leq C W |x-x_0|^{2+\alpha} ,
\end{equation}
 where
 $$
 W:=\|u\|_{L^\infty(\Omega^+_1)} + \|f\|_{C^{\overline \alpha}(\Omega^+_1)} + \|g\|_{C^{2,\overline \alpha}(\Omega^0_{1})}.
 $$
Here $\alpha=\alpha(d, \lambda, \Lambda, \overline \alpha)>0$; $C$  depends on  $d$, $\lambda$, $\Lambda$, $K$, $\overline \alpha$, $\overline C$ and the $C^{2,\overline \alpha}$ regularity of $\partial \Omega$.
\end{thm}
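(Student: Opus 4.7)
The plan is to reduce to a flat boundary with zero Dirichlet data, then construct the boundary Hessian $H(x_0)$ together with the quadratic expansion \re{e:targetC2a} simultaneously via an iterative approximation at dyadic scales, comparing $u$ on each scale to the solution of a frozen-coefficient equation and using the classical Krylov boundary $C^{2,\alpha}$ estimate for that reference problem.

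Fix $x_0 \in \Omega^0_{1/2}$, apply a $C^{2,\overline\alpha}$ diffeomorphism straightening $\partial\Omega$ near $x_0$, and subtract a $C^{2,\overline\alpha}$ extension of $g$. The transformed $u$ (still denoted $u$) solves on $B_1^+$ a fully nonlinear equation satisfying (H1)--(H2) with modified constants, vanishes on $\{x_d=0\} \cap B_1$, and satisfies $\|u\|_{L^\infty} + \|f\|_{C^{\overline\alpha}} \le CW$. Theorem \ref{t:boundaryharnack} applied with this zero boundary data gives $u \in C^{1,\alpha}$ up to the flat portion, with $Du(0)=\nu\,e_d$ for a scalar $\nu$ and $Du \in C^\alpha$ along $\{x_d=0\} \cap B_{1/2}$. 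What must then be shown is the existence of a quadratic polynomial $P$ vanishing on $\{x_d=0\}$ with $F(D^2 P, DP(0), 0) = f(0)$ and $\|u - P\|_{L^\infty(B_r^+)} \le CW r^{2+\alpha}$ for all small $r$.

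I proceed by induction on dyadic scales $r_k = 2^{-k}\rho$, with $\rho$ small. The inductive claim is the existence of polynomials $P_k$ of degree $\le 2$ vanishing on $\{x_d=0\}$ with $F(D^2 P_k, DP_k(0), 0) = f(0)$ and
\[\|u - P_k\|_{L^\infty(B_{r_k}^+)} \le CW r_k^{2+\alpha}, \qquad |D(P_{k+1}-P_k)(0)| + |D^2(P_{k+1}-P_k)| \le CW r_k^{\alpha}.\]
For the step $k\to k+1$, I rescale $\tilde u_k(x) = r_k^{-(2+\alpha)}(u - P_k)(r_k x)$, which solves on $B_1^+$ an equation close to a fixed-coefficient uniformly elliptic equation with test-function errors of size $O(r_k^{\overline\alpha})$ coming from (H2) applied to $P_k$ (whose derivatives are controlled by $W$) and from $f \in C^{\overline\alpha}$---crucially, these errors do not involve the uncontrolled interior $D^2u$. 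I then approximate $\tilde u_k$ by the solution $v$ of the frozen limit problem in $B_1^+$ with $v = 0$ on the flat part and $v = \tilde u_k$ on the spherical part, and invoke Krylov's classical boundary $C^{2,\alpha}$ estimate for this $x$-independent equation to extract a quadratic approximation of $v$ at the origin; on unrescaling, this yields the corrector $P_{k+1} - P_k$. Letting $k \to \infty$, the $P_k$ converge to a quadratic polynomial which is the required expansion at $x_0=0$; repeating the construction at every boundary point and comparing expansions at two different boundary points gives the pointwise equation in \re{e:targetC2a1} and the $C^\alpha$-bound on $H$ there.

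The main obstacle lies in carrying out the iterative step under only $C^{\overline\alpha}$ dependence of $F$ in $x$: because $u$ is merely a viscosity solution and $F$ is not differentiable in $x$, the equation cannot be literally differentiated tangentially. One must therefore operate entirely at the level of viscosity inequalities, absorbing the $x$-approximation error through $P_k$ rather than through $u$ so that it scales like $W r_k^{\overline\alpha}$, and invoking the viscosity version of Krylov's boundary $C^{2,\alpha}$ estimate for fixed-coefficient uniformly elliptic operators to approximate $\tilde u_k$ by a quadratic. Selecting $\alpha = \alpha(d,\lambda,\Lambda,\overline\alpha) < \overline\alpha$ small enough that the cumulative error in all these approximations is geometrically summable and closes the induction is the heart of the analytic work.
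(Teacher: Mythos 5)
Your overall strategy matches the paper's: flatten the boundary, kill the Dirichlet data, and run a Caffarelli-style iteration at geometric scales $r_k$ in which the rescaled solution is compared to the solution of a frozen, autonomous equation, whose boundary quadratic expansion furnishes the next polynomial $P_{k+1}$. The bookkeeping you sketch (keeping $P_k$ vanishing on $\{x_d=0\}$, summing $|D^2(P_{k+1}-P_k)| \lesssim W r_k^\alpha$, choosing $\alpha < \overline\alpha$ so errors telescope) is precisely what the paper does in the proof of Theorem~\ref{t:boundaryfullynonlinear}.

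However, you treat the two genuinely delicate ingredients as black boxes, and neither is available off the shelf for viscosity solutions of \emph{nonconvex} $F$. First, ``the viscosity version of Krylov's boundary $C^{2,\alpha}$ estimate for fixed-coefficient operators'' is exactly Lemma~\ref{l:lemmaboundaryfullynonlinear}, and it does not follow by routine adaptation of the classical argument. The classical proof applies the boundary $C^{1,\alpha}$ estimate to $\partial_d u$; to know that $\partial_d u$ has the right boundary regularity one would like to use $\partial_i \partial_d u = \partial_d \partial_i u$, but for a viscosity solution that is not $C^2$ away from the boundary these mixed second derivatives have no classical meaning and cannot be commuted. The paper devotes the ``Claim'' inside the proof of Lemma~\ref{l:lemmaboundaryfullynonlinear} to recovering the tangential differentiability of $u_d$ along $\{x_d=0\}$ by a two-sided mean-value argument, and then still needs a barrier construction in a small box $Q_r$ to pin down $H_{dd}$. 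Your proposal silently assumes this is a citation.

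Second, you say you ``approximate $\tilde u_k$ by the solution $v$ of the frozen limit problem'' but never explain how to quantify $\|\tilde u_k - v\|_{L^\infty}$ in terms of the size $\kappa$ of the $x$-oscillation of $F$. This is Lemma~\ref{l:perturbation}, and it is the second place where nonconvexity bites: the standard Caffarelli--Cabr\'e device (their Lemma~7.9) requires $C^{1,1}$ bounds on the frozen problem, which fail for general nonconvex $F$, and the compactness alternative (their Lemma~8.2) gives no algebraic rate, which you need to close the geometric iteration. The paper instead constructs a sup-convolution $v^\eps$ of $v$, chooses $\eps \sim \kappa^{1-\alpha/2}$, uses that touching test functions have Hessian bounded below by $-2\eps^{-1}I$ to control the error $|F(M,p,x)-F(M,p,0)|$ by $C\kappa/\eps$, and then applies ABP to conclude $|u - v| \le C\kappa^{\gamma}$. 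Without some substitute for this argument, your iteration has no quantitative contraction and the induction does not close. So the skeleton is right, but the two lemmas you invoke are precisely where the new work of the paper lies, and a correct proof must supply them.
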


The solutions in the above theorems do not have the same regularity in the interior of the domain as on the boundary. Specifically,  solutions of $(S^*)$ are in general only H\"older continuous in $\Omega$ and solutions of $(S)$ have only H\"older continuous gradients in $\Omega$; and these cannot be improved, at least if $d\geq 5$.
Indeed, it was proved by Nadirashvili and Vladut \cite{NV} that for each $\beta>0$ there exists a operator $F=F(M)$ which satisfies (H1) and can even be taken rotationally invariant and smooth, such that $F(D^2u)=0$ has a $(1+\beta)$-homogeneous solution in $B_1$. The derivatives of $u$ are then solutions of $(S^*)$ which do not belong to $C^\beta(B_1)$.

Note in these counterexamples the singularity of the solution occurs in the center of the ball, i.e. far from the boundary. By combining Theorem \ref{t:boundaryfullynonlinear} with a "regularity under smallness" result due to Savin, we can show that solutions of $(S)$ are $C^{2,\alpha}$-smooth in a whole neighbourhood of a $C^{2,\overline \alpha}$-smooth level set, provided $F(M,p,x)$ is $C^1$ in the $M$-variable.

 \begin{thm} \label{t:boundaryneighbourhood}
 Suppose (H1)-(H2) hold, $\Omega$ is a $C^{2,\overline \alpha}$ domain, and $f\in C^{\overline \alpha}(\Omega)$. Suppose in addition that $F(M,p,x)$ is continuously differentiable in $M$. Let $u$ be a viscosity solution to \re{princeq} such that the restriction $g=u|_{\partial \Omega}\in C^{2,\overline \alpha}(\Omega^0_1)$. Then there exist $\alpha, \delta>0$  such that $u\in C^{2,\alpha}(\Omega_\delta)$, where
 $\Omega_\delta = \{x\in \Omega \;:\; \mathrm{dist}(x,\partial \Omega)<\delta\}$. Here $\alpha=\alpha(d, \lambda, \Lambda, \overline \alpha)>0$; $\delta$ depends on  $d$, $\lambda$, $\Lambda$, $K$, $\overline \alpha$, $\overline C$, $\partial \Omega$, and a modulus of continuity of $D_MF$ on $\mathcal{B}_{C_0}\times \overline\Omega$, where $\mathcal{B}_{C_0}$ is a ball in $\S_d\times \R^d$ with radius $C_0$ depending on $d$, $\lambda$, $\Lambda$, $K$, $\overline \alpha$, $\overline C$, $\partial \Omega$.
 \end{thm}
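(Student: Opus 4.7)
The plan is to combine the boundary $C^{2,\alpha}$-expansion of Theorem~\ref{t:boundaryfullynonlinear} with Savin's small-perturbation regularity theorem, applied in balls whose centers lie in $\Omega$ close to $\partial\Omega$ and whose radii are comparable to the distance to the boundary. The heuristic is the classical one: near $\partial\Omega$ the solution $u$ is close in $L^\infty$ to a quadratic polynomial which itself approximately solves the equation, and since $F$ is $C^1$ in the $M$-variable, such closeness to a classical solution upgrades to $C^{2,\alpha}$ regularity.

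First I would apply Theorem~\ref{t:boundaryfullynonlinear} at every boundary point $x_0\in \Omega^0_{1/2}$ to obtain the approximating polynomial
$$P_{x_0}(x):=u(x_0)+Du(x_0)\cdot(x-x_0)+\tfrac12 H(x_0)(x-x_0)\cdot(x-x_0),$$
which satisfies $F(H(x_0),Du(x_0),x_0)=f(x_0)$ and $|u-P_{x_0}|\le CW|x-x_0|^{2+\alpha}$. For $y\in\Omega$ at distance $\rho/2 < \delta$ from $\partial\Omega$, let $x_0$ be a closest boundary point; then $B_\rho(y)\subset \Omega\cap B_{2\rho}(x_0)$, hence $|u-P_{x_0}|\le CW\rho^{2+\alpha}$ on $B_\rho(y)$. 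Define
$$v(z):=\frac{u(y+\rho z)-P_{x_0}(y+\rho z)}{\rho^2},\qquad z\in B_1.$$
A direct computation shows $v$ is a viscosity solution of $\tilde F(D^2 v, Dv, z)=\tilde f(z)$ in $B_1$, where $\tilde F$ satisfies (H1) with the same ellipticity constants, $\tilde F(0,0,z)=0$, and using (H1), (H2) together with $F(H(x_0),Du(x_0),x_0)=f(x_0)$ one obtains
$$\|v\|_{L^\infty(B_1)}\le CW\rho^{\alpha},\qquad \|\tilde f\|_{L^\infty(B_1)}\le CW\rho^{\min(1,\overline\alpha)}.$$
Moreover, comparing $\tilde F$ with the frozen operator $F_0(M):=F(M+H(x_0),Du(x_0),x_0)-f(x_0)$ gives $|\tilde F(M,p,z)-F_0(M)|\le C(\rho|p|+\rho^{\overline\alpha}(|M|+1))$ on bounded sets, while the modulus of continuity of $D_M\tilde F$ near $M=0$ is controlled by that of $D_M F$ on a ball $\mathcal{B}_{C_0}$ of radius $C_0\sim W$ (the uniform bound on $\|H\|_\infty+\|Du\|_\infty$ coming from Theorem~\ref{t:regulglobal} and Theorem~\ref{t:boundaryfullynonlinear}).

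The main obstacle will be to invoke Savin's small-perturbation theorem in a form that simultaneously tolerates (i) a nonzero small right-hand side $\tilde f$, (ii) a small coefficient perturbation $\tilde F-F_0$, and (iii) a small gradient and $z$-dependence. Since $F_0$ is $C^1$ in $M$ and satisfies (H1), Savin's theorem supplies a threshold $\eps_0>0$, depending only on the modulus of continuity of $D_M F$ on $\mathcal{B}_{C_0}\times\overline\Omega$ and on $\lambda,\Lambda,K$, such that if $v$ solves an $\eps_0$-perturbation of $F_0(D^2v)=0$ with $\|v\|_{L^\infty(B_1)}\le \eps_0$, then $\|v\|_{C^{2,\alpha}(B_{1/2})}\le C\eps_0$. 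Choosing $\delta$ so small that all three smallness requirements hold simultaneously for every $\rho\le 2\delta$ (which reduces to $\delta^{\alpha'}\le c\eps_0/W$ for a suitable exponent $\alpha'>0$) yields $v\in C^{2,\alpha}(B_{1/2})$.

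Unrescaling gives a $C^{2,\alpha}$ estimate for $u$ in $B_{\rho/4}(y)$, uniform in $y\in\Omega_\delta$. Combining this with the boundary-point $C^{2,\alpha}$-expansion of Theorem~\ref{t:boundaryfullynonlinear}, through a standard Campanato-type chaining argument between interior and boundary points, yields $u\in C^{2,\alpha}(\Omega_\delta)$ with the claimed dependencies of $\alpha$ and $\delta$. Apart from the invocation of Savin's result, the only subtle point is to keep the frozen operator $F_0$ uniformly bounded in $x_0$, so that a \emph{single} $\eps_0$ can serve simultaneously at every boundary point.
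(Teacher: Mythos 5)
Your proposal follows essentially the same route as the paper's proof: apply Theorem~\ref{t:boundaryfullynonlinear} to obtain a quadratic approximation at the nearest boundary point, pass to a ball whose radius is comparable to the distance to the boundary, invoke a Savin-type small-perturbation regularity theorem (the paper cites \cite{ASilS} and \cite{PT}) after replacing $u$ by $u-P_{x_0}$ and $F$ by the correspondingly shifted operator, and then stitch the interior and boundary estimates together (the paper does this by invoking Proposition~2.4 of \cite{milakis2006regularity} and a covering argument). The only slip is geometric: if $\dist(y,\partial\Omega)=\rho/2$ then $B_\rho(y)\not\subset\Omega$, so you should work in $B_{\rho/2}(y)$ (the paper takes $r=\dist(x,\partial\Omega)/2$), a harmless correction that does not affect the argument.
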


\noindent {\bf Acknowledgement. } The result of Theorem \ref{t:boundaryneighbourhood} was suggested by Nikolai Nadirashvili to the first author after his talk in a conference in Paris, in June 2011.

 Another application of Theorem \ref{t:boundaryfullynonlinear} is contained in \cite{sirakovsilvestre} where we used this theorem to deduce Serrin-like symmetry results for fully nonlinear overdetermined problems, without making regularity assumptions on the solution.

\subsection{Discussion of difficulties and more context}

In general one expects a regularity result to hold whenever an a priori estimate exists. This is in particular the case for {\it global estimates} in the presence of a uniqueness result for viscosity solutions, since then one can use the a priori estimate and the standard continuity method to link the fully nonlinear equation to the Laplace equation, and deduce the existence of a solution in the space where the a priori estimate is proven. Sometimes it is also possible to approximate the equation by more treatable  equations, but in general it is difficult to approximate a fully nonlinear elliptic PDE with some equation that retains its main properties (for results in that direction we refer to \cite{caffarelli2010smooth} and \cite{krylov2012regularity}).

Furthermore, translating the proof of an estimate from classical to viscosity solutions has some obvious difficulties. Every time a derivative of the solution would be written down and used for an estimate, an alternative argument is needed. In many cases, there is some more or less standard procedure for extending a proof from classical to viscosity solutions. In a few cases however, there are some special difficulties that make this task much more complicated. The most extreme example is probably the uniqueness of solutions to second order elliptic fully nonlinear equations. While the comparison principle is obvious for classical solutions, it is an important result in the theory of viscosity solutions (see \cite{jensen1988maximum} and \cite{jensen1988uniqueness}). Another fundamental difference is that classical solutions of $(S^*)$ are solutions of $(S)$ for some $F$, while viscosity solutions are not, in general.

As we noted above, boundary a priori estimates for non-divergence form elliptic operators were first proved by  Krylov, in sections 4-5 of \cite{Kr1}.
It was already observed in that paper that boundary $C^{2,\alpha}$-estimates do not require convexity of the operator.
 A fundamental role in the proof of these estimates is played by an "improvement of oscillation" estimate close to the boundary for solutions of linear equations with zero boundary condition. Shortly after Krylov's work appeared simplifications of the  proof of this estimate, due to Safonov (see \cite{Saf1}) and Caffarelli (unpublished work, to our knowledge referred to for the first time in \cite{kazdan1985prescribing}). The most easily accessible source for Krylov's improvement of oscillation estimate is Theorem 9.31 in \cite{gilbarg2001elliptic}, where the proof from \cite{kazdan1985prescribing} is given.  In that book  the result is stated for  strong solutions, and only in the setting of a flat boundary and zero boundary data. It turns out that the proof in \cite{gilbarg2001elliptic}, as well as the proof in \cite{Saf1},  can be extended to viscosity solutions in the $S^*$ class in arbitrary domains with zero boundary data. However, a difficulty  arises, somewhat unexpectedly, when trying to extend the same result to arbitrary $C^{1,\alpha}$-smooth boundary data, due to the lack of "splitting" in the set of solutions of $(S^*)$. Let us describe this interesting open problem.

 \noindent {\it Open problem}. Let $u$ be a solution to $(S^*)$. Is it true that $u= v+w$, where $v$ solves $(S^*)$ and $v=0$  on $\partial \Omega$ and  $w$ is a solution to $(S^*)$ with $L=0$ ? More simply, say $u$ is a solution of $M^+(D^2 u)\ge f(x) \ge M^-(D^2 u)$ in $\Omega$, is it true that we have the splitting $u=v+w$ where  $v$ satisfies the same inequalities and vanishes on $\partial \Omega$,  while $M^+(D^2 u)\ge 0 \ge M^-(D^2 u)$ in $\Omega$?

   Note that this statement would provide a direct argument, based on the maximum principle, which reduces a general $C^{1,\alpha}$ regularity result to one for functions that vanish on the boundary. Such an argument is described for instance in the proof of Proposition 2.2 in \cite{milakis2006regularity}.

   Note also that the answer to the above question is clearly affirmative if $u$ is a classical solution. Furthermore,  using such splitting is not needed if the boundary data is supposed to be $C^{2}$-smooth, since then one can just remove a $C^{2}$-smooth function from the solution and obtain a new solution which vanishes on the boundary. These two remarks probably explain why this open problem has not been observed before.

We circumvent the lack of splitting by using a Caffarelli-type iteration argument, in which the iteration step is insured by the use of an implicit bound provided by global H\"older estimates, see Lemma \ref{l:boundaryperturbation} and Theorem \ref{t:c1alpha-dirichlet}.

Another example of a difficulty exclusive to viscosity solutions appears in the proof of Theorem \ref{t:boundaryfullynonlinear}.
In Lemma \ref{l:lemmaboundaryfullynonlinear} we prove that if the boundary is flat, a solution to an autonomous fully nonlinear equation which vanishes on the boundary has a second order expansion there, with the corresponding $C^{2,\alpha}$-bound.
This lemma can be proved by essentially applying the $C^{1,\alpha}$ estimates of Theorem \ref{t:boundaryharnack} to the normal derivative $\partial_d u$ -- a well-known idea (note $\partial_d u$ does not vanish on the boundary). Previously we have to prove that $\partial_d u$ is $C^{1,\alpha}$ on the boundary. The known way to do that is to apply Theorem \ref{t:boundaryharnack} (with $g$=0) to the tangential derivatives $\partial_i u$ for $i=1,\dots,d-1$. This implies that $\partial_i u$ is $C^{1,\alpha}$ and in particular $\partial_{d}\partial_i u$ is $C^\alpha$ on the flat boundary.
At this point one would want to imply that $\partial_d u$ is $C^{1,\alpha}$ on the boundary, which is obvious for a classical solution, since $\partial_{d}\partial_i  u = \partial_{i}\partial_d  u$. But for viscosity solutions these second derivatives do not have the classical meaning, and cannot be defined in any way for points that are away from the boundary.

It is worth mentioning that we have an alternative proof of Lemma \ref{l:lemmaboundaryfullynonlinear} and Theorem \ref{t:boundaryfullynonlinear} which only uses Theorem \ref{t:boundaryharnack} in the particular case $g=0$. This proof is based on  a direct barrier construction, and does not apply Theorem \ref{t:boundaryharnack} to $\partial_d u$.

Another particularity in the proof of Theorem \ref{t:boundaryfullynonlinear} appears in the passage from the specific case considered in Lemma \ref{l:lemmaboundaryfullynonlinear} to the general equation \re{princeq}. The perturbation argument that we use is based on an approximation lemma,
Lemma \ref{l:perturbation}, which appears to be new. This lemma says that two solutions of different equations which are close to each other differ by at most a precise algebraic upper bound. This is a version of Lemma 7.9 from \cite{caffarelli1995fully} which does not require the equation to have $C^{1,1}$ estimates.
\medskip

Finally, let us give some more context on regularity results for viscosity solutions of fully nonlinear equations. Caffarelli proved in his breakthrough paper \cite{CafAnnMath} that the Alexandrov-Bakelman-Pucci and Harnack inequalities are valid for viscosity solutions of $F(D^2u,x)=f(x)$, and deduced that these solutions are locally in $C^{1,\alpha}$ (resp. in  $C^{2,\alpha}$), in the presence of a priori bounds in $C^{1,\alpha}$ (resp. $C^{2,\alpha}$) for the solutions of $F(D^2u,0)=0$. A complete account of the theory of the latter equation is given in the book \cite{caffarelli1995fully}. For generalizations to equations with measurable coefficients and the so-called $L^p$-viscosity solutions we refer to \cite{Sw}, \cite{CraKoc}.  Global regularity results and estimates for viscosity solutions can be found in the appendix of  \cite{milakis2006regularity} as well as in \cite{Wi}. Combining the results from all these works we obtain the following global results, which we state for the reader's convenience and completeness.

 \begin{thm} \label{t:regulglobal} (a) Assume (H1)-(H2). If $u$ is a viscosity solution of \re{princeq} in the bounded $C^2$-domain $\Omega$, and $g=u|_{\partial \Omega}\in C^{1,\overline\alpha}(\partial\Omega)$ then  $u\in C^{1,\alpha}(\Omega)$, with a norm bounded by the quantity $W$ from Theorem \ref{t:boundaryharnack} (with $\Omega_1^+$ replaced by $\Omega$ and $\Omega_1^0$ replaced by $\partial \Omega$).

 (b) If in addition the equation $F(D^2u,0,0)=0$ admits global a priori bounds in $C^{2,\overline\alpha}(\Omega)$,  and $g=u|_{\partial \Omega}\in C^{2,\overline\alpha}(\partial\Omega)$ then  $u\in C^{2,\alpha}(\Omega)$, with a norm bounded by the quantity $W$ from Theorem \ref{t:boundaryfullynonlinear} (with $\Omega_1^+$ replaced by $\Omega$ and $\Omega_1^0$ replaced by $\partial \Omega$).
 \end{thm}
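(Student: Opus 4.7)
The plan is to combine the interior regularity theory of Caffarelli \cite{CafAnnMath,caffarelli1995fully}, extended to $x$-dependent equations under (H1)--(H2) by \'Swi\k ech \cite{Sw} (for $C^{1,\alpha}$) and by Winter \cite{Wi} (for $C^{2,\alpha}$), with the boundary expansions of Theorems~\ref{t:boundaryharnack} and~\ref{t:boundaryfullynonlinear}, and then glue them by a scaling argument near $\partial\Omega$. Parts (a) and (b) follow the same template; only the order of the polynomial subtraction differs.

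For part (a), fix $x\in\Omega$ and let $z\in\partial\Omega$ realize $d(x):=\mathrm{dist}(x,\partial\Omega)=|x-z|$. Set $\ell_z(y)=u(z)+G(z)\cdot(y-z)$ with $G$ as in Theorem~\ref{t:boundaryharnack}. Then $v:=u-\ell_z$ satisfies the $(S^*)$-type inequalities \re{princineq} with right-hand side bounded by $L+K|G(z)|\le CW$, and Theorem~\ref{t:boundaryharnack} gives $\|v\|_{L^\infty(B_{2d(x)}(x))}\le CW\,d(x)^{1+\alpha}$. Applying the interior $C^{1,\alpha}$ estimate on the unit-rescaled ball $B_{2d(x)}(x)\subset\Omega$ yields
$$
|Du(x)-G(z)|\le CW\,d(x)^{\alpha},\qquad [Du]_{C^{\alpha}(B_{d(x)/2}(x))}\le CW.
$$
Combining this with the H\"older control of $G$ on $\partial\Omega$ from \re{e:targetC1a0} and a case analysis on two points $x_1,x_2\in\Omega$ -- distinguishing $|x_1-x_2|\le\tfrac14\min(d(x_1),d(x_2))$ (single-ball interior bound) from the complementary case (route through the nearest boundary points via \re{e:targetC1a}) -- delivers the global $C^{1,\alpha}$ bound in terms of $W$.

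Part (b) proceeds identically: at the nearest boundary point $z$ one subtracts from $u$ the quadratic Taylor polynomial $Q_z(y)=u(z)+Du(z)\cdot(y-z)+\tfrac{1}{2}H(z)(y-z)\cdot(y-z)$ supplied by Theorem~\ref{t:boundaryfullynonlinear}, obtains $\|u-Q_z\|_{L^\infty(B_{2d(x)}(x))}\le CW\,d(x)^{2+\alpha}$ from \re{e:targetC2a}, and applies Caffarelli's interior $C^{2,\alpha}$ estimate \cite{CafAnnMath,caffarelli1995fully} -- available precisely by the hypothesis that $F(D^2u,0,0)=0$ admits $C^{2,\overline\alpha}$ a priori bounds, in the $x$-dependent version of \cite{Wi} -- to $u-Q_z$ on $B_{2d(x)}(x)$. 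The H\"older bound \re{e:targetC2a1} on $H$ along $\partial\Omega$ and the same case analysis then yield the global $C^{2,\alpha}$ estimate.

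The only (routine) obstacle is checking that the perturbed function $u-\ell_z$ (resp.\ $u-Q_z$) satisfies hypotheses for which the cited interior estimate applies with constants independent of $d(x)$: (H1) is used to absorb the lower-order terms produced by subtracting the polynomial (the modification of $F$ is a shift in the $Du$-variable, controlled by $K|D\ell_z|\le CW$ in part (a) and by the analogous bound involving $\|H\|_{L^\infty(\Omega_{1/2}^0)}$ in part (b)), and (H2) ensures that the $C^{\overline\alpha}$ dependence of $F$ on $x$ remains uniformly bounded after rescaling the ball $B_{2d(x)}(x)$ to the unit ball. Both checks are standard, consistent with the statement being presented for the reader's convenience rather than as a new result.
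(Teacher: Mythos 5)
Your proof is correct and follows precisely the route the paper indicates: the paper does not prove Theorem~\ref{t:regulglobal} in-text but attributes it to combining the interior estimates of Caffarelli, \'Swi\k ech and Winter with the boundary expansions of Theorems~\ref{t:boundaryharnack}--\ref{t:boundaryfullynonlinear} via the gluing template of Propositions~2.3--2.4 in \cite{milakis2006regularity}, which is exactly what you carry out. One cosmetic slip: $B_{2d(x)}(x)\not\subset\Omega$ since $d(x)=\mathrm{dist}(x,\partial\Omega)$, so the interior estimate should be applied on, say, $B_{d(x)/2}(x)$, which changes none of the bounds.
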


 We recall that the first assumption in Theorem \ref{t:regulglobal} (b)  is verified if  $F(M,0,0)$ is convex in $M$. The convexity assumption can be removed in some cases, see \cite{CabCaf}, but not in general.

 Theorem \ref{t:regulglobal} can be compared to Theorems \ref{t:boundaryharnack}-\ref{t:boundaryneighbourhood} from the introduction. In these theorems we assume much less on the solution but prove only boundary  regularity (and, as we already noted, interior regularity does not hold).

 We also observe that it is well-known how to put together boundary regularity results such as the ones proved in Theorems \ref{t:boundaryharnack}-\ref{t:boundaryfullynonlinear} and  interior regularity results, in order to deduce global statements. A simple procedure of this sort can be found for instance in Propositions~2.3 and 2.4 in \cite{milakis2006regularity}.


\section{Preliminaries}

In the sequel we denote with
$B_1^+$ the half ball $\{x=(x',x_d) \in \R^d : |x| < 1 \text{ and } x_d >0 \}.$
The bottom boundary of the half ball is $B_1^0=\{x=(x',0) \in \R^d : |x'| < 1\}.$

We  recall that we can always  perform a change of variables to flatten the boundary. Indeed, if $\Omega$ is a $C^{2}$ domain (resp. $C^{2,\overline\alpha}$ domain) then, for any point $x \in \partial \Omega$, there is a $C^2$ (resp. $C^{2,\overline\alpha}$) diffeomorphism $\varphi$ which maps a neighborhood of $x$ in $\Omega$ to the upper half ball $B_1^+$. The following proposition recalls the equation satisfied by $u \circ \varphi^{-1}$.

\begin{prop} \label{p:changeofvars}
1. If $u$ is a solution to $F(D^2 u, Du, x) = 0$ in $\Omega$, then $v(x) = u(\varphi^{-1}(x))$ is a solution in $B_1^+$ to
\[ F(D\varphi^t(\varphi^{-1}(x)) D^2 v(x) D\varphi(\varphi^{-1}(x)) + Dv(x) D^2 \varphi(\varphi^{-1} x) , Dv(x) D \varphi(\varphi^{-1}(x)),\varphi^{-1}(x)) = 0\]
If we denote with $\tilde F (D^2 v(x), Dv(x), x)$ the operator in the left-hand side of this equality and $ F$ satisfies (H1) and/or (H2), then $\tilde F$ satisfies (H1) and/or (H2), with possibly modified constants $K,L,\overline C$, depending only on the $C^2$ (resp. $C^{2,\overline\alpha}$) norm of $\varphi$.

2. If $u$ is a solution to \re{princineq}, then $v(x) = u(\varphi^{-1}(x))$ is a solution to \re{princineq}, with possibly modified constants $K,L$, depending only on the  $C^2$  norm of $\varphi$.
\end{prop}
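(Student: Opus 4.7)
The plan is to carry out the chain rule computation for a classical solution, promote it to viscosity solutions by pulling test functions back through $\varphi$, and then verify that the resulting operator $\tilde F$ inherits $(H1)$ and $(H2)$ from $F$ with constants that depend only on the $C^2$ (resp.\ $C^{2,\overline\alpha}$) norm of $\varphi$.

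Setting $y=\varphi^{-1}(x)$ and differentiating the identity $u(y)=v(\varphi(y))$ twice yields $Du(y)=Dv(\varphi(y))\,D\varphi(y)$ and
\[
D^2u(y)=D\varphi(y)^{t}\,D^2v(\varphi(y))\,D\varphi(y)+Dv(\varphi(y))\cdot D^2\varphi(y),
\]
where the last term is the matrix with entries $\sum_k\partial_k v\,\partial_i\partial_j\varphi_k$. Substituting and writing everything in the $x$-variable reproduces the displayed equation for $v$. To pass from classical to viscosity solutions, note that if $\phi\in C^2$ touches $v$ from above (say) at $x_0\in B_1^+$, then $\psi:=\phi\circ\varphi\in C^2$ touches $u$ from above at $y_0=\varphi^{-1}(x_0)$; the chain rule applied to $\psi$ expresses $D\psi(y_0),D^2\psi(y_0)$ in terms of $D\phi(x_0),D^2\phi(x_0)$ in exactly the same algebraic way, so the viscosity subsolution inequality for $u$ transfers verbatim to the $\tilde F$-inequality for $v$. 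The same argument handles supersolutions, and it also applies directly to the Pucci inequalities in $(S^*)$ (Part~2) without reference to any $F$.

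To check $(H1)$ for $\tilde F(M,p,x)=F(A^tMA+p\cdot H,\,pA,\,\varphi^{-1}(x))$, with $A:=D\varphi(\varphi^{-1}(x))$ and $H:=D^2\varphi(\varphi^{-1}(x))$, I use the sup-representation $\mathcal{M}^+_{\lambda,\Lambda}(N)=\sup_{\lambda I\le B\le\Lambda I}\mathrm{tr}(BN)$ to get
\[
\mathcal{M}^{\pm}_{\lambda,\Lambda}(A^{t}(M-N)A)\;\text{bounded in terms of}\;\mathcal{M}^{\pm}_{\tilde\lambda,\tilde\Lambda}(M-N),
\]
where $\tilde\lambda,\tilde\Lambda$ depend only on the extreme singular values of $A$, hence on $\|D\varphi\|_{C^0}$ and $\|D\varphi^{-1}\|_{C^0}$. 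The additional term $(p-q)\cdot H$ has Frobenius norm controlled by $|p-q|\|D^2\varphi\|_{C^0}$, so it is absorbed into the first-order part with constant $\Lambda d\,\|D^2\varphi\|_{C^0}$; likewise the Lipschitz correction from $(p-q)A$ contributes $K\|D\varphi\|_{C^0}|p-q|$. For $(H2)$, Hölder continuity of $\tilde F$ in $x$ follows from composing the three $x$-dependent ingredients $\varphi^{-1}$, $A(\cdot)$ and $H(\cdot)$, which are respectively $C^{1,\overline\alpha}$, $C^{1,\overline\alpha}$ and $C^{\overline\alpha}$ when $\varphi\in C^{2,\overline\alpha}$, with the growth factor $|M|+|p|$ coming from the structure on the right-hand side of $(H2)$ after linearising in those arguments.

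The only technical point worth flagging is bookkeeping: the map $M\mapsto A^{t}MA$ rescales the Pucci constants by factors involving the singular values of $A$, so the new ellipticity constants for $\tilde F$ are not literally $\lambda,\Lambda$ but equivalent constants depending on $\|D\varphi\|_{C^1}$; since all subsequent constants in the paper are allowed to depend on these norms, this is absorbed into the ``possibly modified constants'' of the statement. No genuinely analytic difficulty arises---once the pullback of test functions is in place, the entire proposition reduces to elementary matrix algebra together with the chain rule.
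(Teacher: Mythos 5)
Your proposal is correct and is essentially the detailed version of the paper's one-line proof (``This follows from a straightforward computation and use of the definition of a viscosity solution''): chain rule for the classical identity, pullback of test functions through $\varphi$ for the viscosity-solution transfer, and elementary estimates for $\mathcal{M}^\pm_{\lambda,\Lambda}(A^tMA)$ for (H1)--(H2). Your bookkeeping remark is also well-taken: the map $M\mapsto A^tMA$ does modify the Pucci ellipticity constants (roughly to $\lambda/\|A^{-1}\|^2$, $\Lambda\|A\|^2$), and the proposition's statement that only $K,L,\overline C$ change is slightly loose on this point, though harmless since all downstream constants are allowed to depend on $\lambda,\Lambda$ and on the $C^2$-norm of $\varphi$.
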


\begin{proof} This follows from a straightforward computation and use of the definition of a viscosity solution.
\end{proof}

We  observe that gradient terms and explicit $x$-dependence are unavoidable after the change of variables. That is why it would not simplify the problem to consider equations without gradient terms or independent of $x$ in the theorems in the introduction. \medskip


 \begin{prop}[interior Harnack inequality] \label{p:harnack}
Let $u$ be a nonnegative solution of \eqref{princineq} in $B_1^+$. Then for each compact subset $\Sigma$ of $B_1^+$ there exists a constant $C$ depending on $d,\lambda,\Lambda, K$, and $\Sigma$ such that
\[ \sup_{\Sigma} u \leq C (\inf_\Sigma u + L).\]
\end{prop}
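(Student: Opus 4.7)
The plan is to reduce the statement to the standard interior Harnack inequality for viscosity solutions of the Pucci inequalities, and then to use a covering/chaining argument to move from a local to a "semilocal" estimate on an arbitrary compactly contained set $\Sigma$.

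First, I would recall the interior Harnack inequality of Caffarelli for the class $(S^*)$ (see e.g.\ \cite[Theorem 4.3]{caffarelli1995fully}, adapted to include a first order term as in \cite{Sw}): if $v\ge 0$ satisfies $(S^*)$ in a ball $B_{2r}(x_0)$, then
\[
\sup_{B_r(x_0)} v \;\le\; C_0\bigl(\inf_{B_r(x_0)} v + r\,L\bigr),
\]
where $C_0$ depends only on $d,\lambda,\Lambda$ and on the scaled gradient coefficient $Kr$. Since $\Sigma\subset\subset B_1^+$, the number $2\delta:=\dist(\Sigma,\partial B_1^+)$ is strictly positive. Hence for every $x_0\in\Sigma$ the ball $B_{2\delta}(x_0)$ lies in $B_1^+$, so the interior Harnack inequality can be applied in each such ball with $r=\delta$; this yields the pointwise bound on $B_\delta(x_0)$ with constant depending on $d,\lambda,\Lambda,K,\delta$.

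To get a single inequality valid on all of $\Sigma$, I would use a standard chaining argument. The thickened set $\Sigma_\delta := \{x\in B_1^+ : \dist(x,\partial B_1^+)\ge \delta\}$ is a connected compact set containing $\Sigma$ (for $\delta$ small, which we may assume after shrinking). Cover $\Sigma_\delta$ by finitely many balls $B_{\delta/2}(x_i)$, $i=1,\dots,N$, with $x_i\in\Sigma_\delta$; by connectedness, any two points $x,y\in\Sigma$ can be joined through a sequence of at most $N$ such balls with consecutive ones overlapping. Applying the local Harnack inequality in each ball in the chain, and iterating at most $N$ times, produces
\[
u(x) \;\le\; C_0^{\,N}\, u(y) \;+\; \Bigl(\sum_{k=0}^{N-1} C_0^{\,k}\Bigr)\,C_0\,\delta L
\;\le\; C\bigl(u(y)+L\bigr),
\]
where $C$ and $N$ depend only on $d,\lambda,\Lambda,K$ and $\Sigma$ (through $\delta$ and the diameter of $\Sigma_\delta$). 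Taking the supremum over $x\in\Sigma$ and the infimum over $y\in\Sigma$ gives the claimed bound.

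There is really no serious obstacle here, since the proposition is an immediate corollary of Caffarelli's interior Harnack theorem for the class $(S^*)$; the only point that requires a tiny bit of care is checking that the Harnack constant per step, $C_0$, depends on the gradient term only through the product $K\delta$, which is harmless because $\delta$ is fixed once $\Sigma$ is. The rest is the standard Harnack chain.
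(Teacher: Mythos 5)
Your proposal is correct and takes essentially the same route as the paper: the authors simply cite the interior Harnack inequality (Theorem 4.3 in Caffarelli--Cabr\'e, or the corresponding result in the literature) as a well-known fact, and the extension from a local ball estimate to an arbitrary compact $\Sigma\subset\subset B_1^+$ is exactly the standard Harnack-chain argument you spell out.
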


\begin{proof}
This is a well-known result, see for instance Theorem 4.3 in \cite{caffarelli1995fully} or \cite{Wa}.
\end{proof}

In the following we set $e = (0,\dots,0,1/2)$.

\begin{prop}[Harnack inequality up to the boundary] \label{p:harnackglobal}
Let $u$ be a nonnegative solution of \eqref{princineq} in $B_1^+$ which vanishes on $B_1^0$. Then
\[ \sup_{B_{1/2}^+} u \leq C (u(e)+L).\]
The constant $C$ depends on $d,\lambda,\Lambda$, and $K$.
\end{prop}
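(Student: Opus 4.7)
The plan is to combine the interior Harnack inequality from Proposition \ref{p:harnack} with a classical supersolution barrier near the flat boundary $B_1^0$. After dividing $u$ by $u(e) + L$ (which preserves membership in the class $(S^*)$, with the same $K$ and a new $L \leq 1$), I may assume $u(e) \leq 1$ and $L \leq 1$, reducing the claim to $\sup_{B_{1/2}^+} u \leq C$ with $C = C(d,\lambda,\Lambda,K)$.

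First I would fix a compactly contained set $E \Subset B_1^+$ that stays uniformly away from both $B_1^0$ and $\partial B_1$, for instance $E := \{|x| \leq 15/16,\, x_d \geq 1/32\}$, and cover it by finitely many balls $B_r(x_i) \Subset B_1^+$ of a fixed radius $r$ arranged so that every point of $E$ is joined to $e$ by a chain of at most $N = N(d)$ overlapping balls from the cover. Successive applications of Proposition \ref{p:harnack} along the chain then give $\sup_E u \leq C_1$ with $C_1 = C_1(d,\lambda,\Lambda,K)$.

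Next I would bound $u$ on the remaining strip $B_{1/2}^+ \cap \{x_d < 1/32\}$ via a barrier. Specifically, I would build a subdomain $D \subset B_1^+$ with $B_{1/2}^+ \subset D$ and $\partial D \setminus B_1^0 \subset E$, and a classical $C^2$ supersolution $w$ of $\mathcal{M}^-_{\lambda,\Lambda}(D^2 w) - K|\nabla w| \geq L$ in $D$, satisfying $w \geq 0$ on $\partial D \cap B_1^0$ and $w \geq C_1$ on $\partial D \setminus B_1^0$. Since $u$ is itself a viscosity subsolution of the same Pucci inequality, the comparison principle for viscosity solutions (Theorem 3.3 of \cite{caffarelli1995fully}) together with $u \leq w$ on $\partial D$ yields $u \leq w$ in $D$; combined with the elementary pointwise bound $w \leq C$, this gives the desired estimate. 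An explicit candidate is $w(x) = C_1 \Phi(x) + A\bigl(e^{\beta x_d} - 1\bigr)$, where $\Phi$ is an affine renormalisation of the radial power $|x - \xi|^{-\gamma}$ centered at a point $\xi$ lying below $B_1^0$, with $\gamma = \gamma(d,\lambda,\Lambda,K)$ chosen large enough (from the standard eigenvalue computation for the Pucci operator on radial powers) to make $\mathcal{M}^-_{\lambda,\Lambda}(D^2 \Phi) \geq 0$; the exponential term, for which $\mathcal{M}^-_{\lambda,\Lambda}(D^2 e^{\beta x_d}) - K|\nabla e^{\beta x_d}| = (\lambda \beta^2 - K \beta)e^{\beta x_d}$, handles the first-order perturbation (for $\beta > K/\lambda$) and, with $A$ suitably chosen, absorbs the right-hand side $L$.

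The main obstacle is the geometric matching of $D$ with $E$: the non-flat part of $\partial D$ must lie uniformly in $E$ (where the interior bound is available), yet $D$ must still contain all of $B_{1/2}^+$. Taking $D$ to be the annular region $\{r_{\text{in}} < |x - \xi| < r_{\text{out}}\} \cap B_1^+$ resolves this, since its curved boundary then consists of spherical caps centered at $\xi$ whose position inside $E$ can be arranged by elementary Euclidean calculations on the parameters $\xi, r_{\text{in}}, r_{\text{out}}$, while the flat portion of $\partial D$ automatically lies in $B_1^0$, where $u = 0$.
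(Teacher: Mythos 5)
Your outline — normalize by $u(e)+L$, use the interior Harnack inequality to control $u$ on a compact set away from the boundary, then compare with a classical barrier near $B_1^0$ — is the natural maximum-principle strategy, and it is consistent with the paper's citation to Theorem~1.3 of \cite{BCN}, whose proof uses only comparison. However, two steps in your execution fail, and both are essential.

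\textbf{The barrier inequality is reversed.}
You ask for a $C^2$ function $w$ with $\mathcal{M}^-_{\lambda,\Lambda}(D^2 w) - K|\grad w| \geq L$ and then invoke comparison to conclude $u\leq w$. But $u$ satisfies $\mathcal{M}^-_{\lambda,\Lambda}(D^2 u) - K|\grad u|\leq L$, which makes $u$ a viscosity \emph{supersolution} of $\mathcal{M}^-_{\lambda,\Lambda}(D^2 v) - K|\grad v| = L$, while your $w$ is a classical \emph{subsolution} of the same equation; comparison yields $w\leq u$, the opposite of what you need. This is also visible in the shape of the candidate: an increasing radial power $|x-\xi|^{-\gamma}$ with $\mathcal{M}^-(D^2\Phi)\geq 0$, and $e^{\beta x_d}-1$, are both convex in the directions that matter, and such functions lie below $u$, not above. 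To bound $u$ from above you need a supersolution of the \emph{first} line of $(S^*)$: a $C^2$ function with
\[
\mathcal{M}^+_{\lambda,\Lambda}(D^2 w) + K|\grad w| \leq -L .
\]
Then $u-w$ is a viscosity subsolution of $\mathcal{M}^+(D^2\cdot)+K|\grad\cdot|=0$ and the ABP maximum principle gives $u\leq w$. The concrete pieces must accordingly be concave-type, e.g.\ $A\bigl(1-e^{-\beta x_d}\bigr)$ with $\beta>K/\lambda$, and $-a|x-\xi|^{-\gamma}$ with $a>0$ and $\gamma$ larger than the Pucci critical exponent.

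\textbf{The geometric arrangement you ask for cannot exist.}
You want an open $D$ with $B_{1/2}^+\subset D\subset B_1^+$ and $\partial D\setminus B_1^0\Subset B_1^+$, so that the interior Harnack bound $u\le C_1$ is available on all of the non-flat boundary. No such $D$ exists. If $\partial D\cap\{0<x_d<\delta\}=\emptyset$ for some $\delta>0$, then $D\cap\{0<x_d<\delta\}$ is open and relatively closed in the connected set $B_1^+\cap\{0<x_d<\delta\}$, and nonempty, hence equals all of it; but then every $y$ with $|y|=1$, $0<y_d<\delta$ is a limit of $(1-\eps)y\in D$, so $y\in\partial D\setminus B_1^0$, which therefore reaches $\partial B_1$. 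Otherwise $\partial D\setminus B_1^0$ accumulates on $\{x_d=0\}$. In either case the non-flat part of $\partial D$ is not compactly contained in $B_1^+$. In particular the outer cap $\{|x-\xi|=r_{\mathrm{out}}\}\cap B_1^+$ of your annular region necessarily exits through $\{x_d=0\}$ once the region is big enough to contain $B_{1/2}^+$, so the ``elementary Euclidean calculation'' you invoke cannot succeed. This is exactly why the proposition is nontrivial: if such a $D$ existed, the statement would be an immediate corollary of the interior Harnack inequality and one use of ABP, with no work.

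The way around is to accept that on part of $\partial D$ the only available bound on $u$ is the uncontrolled number $N:=\sup_{B_1^+}u$, and to build a barrier that is $\geq N$ on that part, $\geq 0$ on $B_1^0$, satisfies $\mathcal{M}^+(D^2 w)+K|\grad w|\leq -L$, and is $\leq\theta N + C(u(e)+L)$ on $B_{1/2}^+$ for some $\theta<1$. This produces an oscillation-improvement inequality that is then closed by rescaling and iterating over dyadic scales. That iteration step is missing from your outline, and without it the barrier argument cannot be set up at all.
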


\begin{proof}
This is Theorem 1.3 in  \cite{BCN}. In that paper only classical solutions and linear equations were considered; however exactly the same proof applies in our situation, since the proof in \cite{BCN} uses only the comparison principle.
\end{proof}

\begin{prop}[Lipschitz estimate]
\label{p:lipschitz}
Let $u$ be a solution of \eqref{princineq} in $B_1^+$ which vanishes on $B_1^0$. Then
\[ |u(x)| \leq C (u(e)+L) x_d \qquad \text{in } B_{1/2}^+.\]
The constant $C$ depends on $d,\lambda,\Lambda$, and $K$.
\end{prop}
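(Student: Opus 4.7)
The plan is to combine the boundary Harnack estimate (Proposition~\ref{p:harnackglobal}) with a classical Hopf-type barrier argument. It is enough to prove $u(x)\le C(u(e)+L)\,x_d$ under the extra assumption $u\ge 0$; the general two-sided bound then follows by applying the one-sided estimate to the nonnegative function $u^+$ and (after observing that $-u$ also solves $(S^*)$ and vanishes on $B_1^0$) to $(-u)^+$.

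First I would extend Proposition~\ref{p:harnackglobal} from a bound on $B_{1/2}^+$ to a bound on $B_{3/4}^+$. A standard rescaling $u(x)\mapsto u(Rx)$ for an $R$ slightly greater than $3/4$, combined with the interior Harnack inequality of Proposition~\ref{p:harnack} to compare $u(Re)$ with $u(e)$, yields a constant $C_0=C_0(d,\lambda,\Lambda,K)$ with
\[
M \;:=\; \sup_{B_{3/4}^+}u \;+\; L \;\le\; C_0(u(e)+L).
\]

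Next, for each $y\in B_{1/2}^0$ I would construct a smooth supersolution $\phi_y$ in $B_{3/4}^+$ dominating $u$. Place the auxiliary center $y_{*}:=y-h\,e_d$ just below the flat boundary (say $h=1/10$) and set
\[
\phi_y(x)\;:=\;A\,M\!\left(1-\frac{h^\gamma}{|x-y_{*}|^\gamma}\right)\;+\;\frac{BL}{\beta}\bigl(1-e^{-\beta x_d}\bigr),
\]
with $A,B,\gamma,\beta$ chosen only in terms of $d,\lambda,\Lambda,K$. A direct eigenvalue computation in radial coordinates around $y_{*}$ gives, for the first summand $\phi^{(1)}_y$,
\[
M^+(D^2\phi^{(1)}_y)+K|\nabla \phi^{(1)}_y|\;=\;A\,M\,\gamma\,h^\gamma\,|x-y_{*}|^{-\gamma-2}\bigl[\Lambda(d-1)-\lambda(\gamma+1)+K|x-y_{*}|\bigr],
\]
which is $\le 0$ for $\gamma$ large (since $|x-y_*|$ is bounded in $B_{3/4}^+$). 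For $\beta$ large, the second summand satisfies $M^+(D^2\phi^{(2)}_y)+K|\nabla\phi^{(2)}_y|\le -L$. By subadditivity of $M^+$, the sum $\phi_y$ then satisfies $M^+(D^2\phi_y)+K|\nabla\phi_y|\le -L$ in $B_{3/4}^+$. On the flat face $B_{3/4}^0$ both summands are nonnegative (since $|x-y_*|\ge h$ there), so $\phi_y\ge 0=u$; on the curved part $\partial B_{3/4}\cap\{x_d\ge 0\}$, $|x-y_*|$ is bounded below and above uniformly in $y\in B_{1/2}^0$, so for $A$ large the first summand alone exceeds $M\ge u$.

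Applying the standard comparison principle between the viscosity subsolution $u$ (first inequality of $(S^*)$) and the classical strict supersolution $\phi_y$ gives $u\le \phi_y$ in $B_{3/4}^+$. Any $x\in B_{1/2}^+$ may be written $x=y+x_d\,e_d$ with $y=(x',0)\in B_{1/2}^0$; the elementary inequalities $1-(h/(h+t))^\gamma\le \gamma t/h$ and $1-e^{-\beta t}\le \beta t$ yield
\[
u(x)\;\le\;\phi_y(y+x_d e_d)\;\le\;\Bigl(\tfrac{A\gamma}{h}+B\Bigr)\,M\,x_d\;\le\;C(u(e)+L)\,x_d,
\]
uniformly in $x$. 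The main technical obstacle is to meet three competing requirements simultaneously: $\phi_y$ must be a strict supersolution absorbing the source term $L$, must dominate $\sup u$ on the curved part of $\partial B_{3/4}^+$, and must grow no faster than $CM\,x_d$ along the inward normal from $y$. The resolution is the additive decomposition above, with a Gilbarg--Trudinger type inverse-power barrier handling the Dirichlet data on the curved boundary and a simple exponential barrier absorbing $L$, glued by subadditivity of $M^+$.
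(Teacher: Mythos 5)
The paper's own ``proof'' of this proposition is a one-line citation to Lemma~2.1 of \cite{BCN}, so you are supplying detail the paper leaves implicit. Your barrier is the classical construction behind that lemma: an inverse-power supersolution centred at $y_*=y-he_d$ just below the flat face, added to an exponential term $\frac{BL}{\beta}(1-e^{-\beta x_d})$ that absorbs $L$, glued by subadditivity of $M^+$. The eigenvalue computation for the radial piece, the choices of $\gamma,\beta,B$ depending only on $d,\lambda,\Lambda,K$, the comparison of $u$ with $\phi_y$ across $\partial B_{3/4}^+$, and the linearisations $1-(h/(h+t))^\gamma\le\gamma t/h$ and $1-e^{-\beta t}\le\beta t$ are all in order, and together with a bound $\sup_{B_{3/4}^+}u\le C_0(u(e)+L)$ they yield the stated estimate for \emph{nonnegative} $u$. (Your rescaling to pass from $B_{1/2}^+$ to $B_{3/4}^+$ is a bit glib -- $u(Rx)$ with $R$ near $3/4$ does not by itself enlarge the domain where Proposition~\ref{p:harnackglobal} applies; a short Harnack chain or covering argument is what is needed -- but that step is routine.)

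The genuine gap is in your opening reduction. You propose to deduce the two-sided estimate by applying the one-sided bound to $u^+$ and to $(-u)^+$, but $u^+=\max(u,0)$ satisfies only the \emph{first} inequality in \eqref{princineq}: the maximum of a supersolution and a constant is not a supersolution, so $u^+$ does not belong to the class $(S^*)$, and Proposition~\ref{p:harnackglobal} -- the step you need to convert $\sup_{B_{3/4}^+}u^+$ into $u^+(e)+L$ -- cannot be invoked for it. (The barrier comparison itself is fine for $u^+$, as it only uses the subsolution side.) In fact, for sign-changing $u$ the statement as printed cannot hold at all: $u=-x_d$ solves \eqref{princineq} with $L=0$, vanishes on $B_1^0$, and has $u(e)+L=-\tfrac12<0$, so $|u(x)|\le C(u(e)+L)x_d$ fails. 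What Lemma~\ref{l:flatboundaryharnack} actually uses, and what your barrier already proves with no appeal to Proposition~\ref{p:harnackglobal}, is the bound
\[
|u(x)|\;\le\;C\bigl(\|u\|_{L^\infty(B_{3/4}^+)}+L\bigr)\,x_d\qquad\text{in }B_{1/2}^+,
\]
obtained by running the comparison once for $u$ and once for $-u$, each a viscosity subsolution of the first inequality. The $u(e)$ form is the nonnegative case, where Proposition~\ref{p:harnackglobal} upgrades $\|u\|_{L^\infty}$ to $u(e)+L$; you should record it that way and drop the $u^+,(-u)^+$ splitting.
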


\begin{proof} This is Lemma 2.1 in \cite{BCN}.
\end{proof}

Next, we observe that after flattening the domain we can zoom into a neighborhood of a point on $B_1^0$ (which we will always assume to be the origin),  and assume that the lower order terms in the equation are as small as we like. That is, we can set $u_r(x)= u(rx)$ and observe that the function $u_r$ satisfies
\begin{equation} \label{e:mainwitheps}
\begin{aligned}
M^+(D^2 u_r) + rK |\grad u_r|  &\geq-r^2L \; &\text{ in } B_1^+, \\
M^-(D^2 u_r) - rK |\grad u_r|   &\leq r^2L \; &\text{ in } B_1^+, \\
\end{aligned}
\end{equation}
which in particular means that we can assume, by fixing some small $r$, that in \re{princineq} we have
\begin{equation}\label{generic}
 \max\{K,L\}\leq \eps_0,
 \end{equation}
  for any initially fixed positive constant $\eps_0$. We insist that \re{generic} is generic in a neighborhood of any given point on the boundary.

\begin{prop}[Hopf principle]
\label{p:hopf}
Let $u$ be a nonnegative solution of \eqref{princineq} in $B_1^+$ which vanishes on $B_1^0$. There exists $\eps_0>0$ such that if $|K|\le \eps_0$, then
\[ u(x) \geq c_0 \left( u(e) - C_0 L \right) x_d \qquad \text{in } B_{1/2}^+.\]
The constants $\eps_0$, $c_0$ and $C_0$ depend on $d,\lambda,\Lambda$ only.
\end{prop}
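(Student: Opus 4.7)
The plan is a pointwise tangent-ball Hopf argument, combined with an interior Harnack chain to transport the lower bound at $e$ into the bulk. Set $m := u(e) - C_0 L$; if $m \le 0$ the conclusion is trivial since $u \ge 0$, so assume $m > 0$. Iterating Proposition~\ref{p:harnack} along a finite chain of balls covering the compact set $\Sigma := \overline{B_{3/4}^+}\cap\{x_d \ge 1/16\} \subset\subset B_1^+$, which contains $e$, yields $u \ge c_1 m$ on $\Sigma$, provided $C_0$ is taken larger than the resulting Harnack constant. This already handles $x_0 \in B_{1/2}^+$ with $x_{0,d} \ge 1/16$, since then $u(x_0) \ge c_1 m \ge 2 c_1 m\, x_{0,d}$.

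For $x_0 \in B_{1/2}^+$ with $x_{0,d} < 1/16$, set $x_0^* := (x_0',0)$, $r := 1/8$, $y_0 := x_0^* + r e_d$. The ball $B_r(y_0)$ is tangent to $B_1^0$ at $x_0^*$ and contained in $\overline{B_1^+}$; moreover $\partial B_{r/2}(y_0) \subset \Sigma$, so $u \ge c_1 m$ there. In the annulus $A := B_r(y_0)\setminus \overline{B_{r/2}(y_0)}$ I introduce the classical exponential barrier
\[
\varphi(x) = \eta\bigl(e^{-\alpha|x-y_0|^2} - e^{-\alpha r^2}\bigr),
\]
where the dimensional constants $\alpha, \eta > 0$ are chosen below. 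Diagonalising $D^2\varphi$ shows it has one radial eigenvalue $(4\alpha^2|x-y_0|^2 - 2\alpha)e^{-\alpha|x-y_0|^2}$ and $d-1$ tangential eigenvalues $-2\alpha e^{-\alpha|x-y_0|^2}$, whence in $A$
\[
M^-(D^2\varphi) \ge \eta \bigl[\lambda(4\alpha^2|x-y_0|^2 - 2\alpha) - 2(d-1)\Lambda\alpha\bigr] e^{-\alpha|x-y_0|^2}.
\]
Fixing $\alpha = \alpha(d,\lambda,\Lambda)$ large enough the bracket is at least $c\alpha^2$, so $M^-(D^2\varphi) \ge c\alpha^2\eta\, e^{-\alpha r^2}$, while $|\nabla\varphi| \le 2\alpha r\eta$. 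Taking $\varepsilon_0$ small in this (now-fixed) $\alpha$ absorbs the first-order term and yields $M^-(D^2\varphi) - K|\nabla\varphi| \ge c_4\alpha^2 \eta\, e^{-\alpha r^2}$ in $A$. Setting $\eta := c_1 m$ and enlarging $C_0$ further if necessary so that $c_4\alpha^2 c_1 m\, e^{-\alpha r^2} \ge L$ (equivalently, so that $m > 0$ forces $m$ to exceed a dimensional multiple of $L$), the function $\varphi$ becomes a classical strict subsolution of the Pucci inequality satisfied by $u$ in $A$.

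With $\varphi = 0 \le u$ on $\partial B_r(y_0)$ and $\varphi \le \eta = c_1 m \le u$ on $\partial B_{r/2}(y_0)$, the weak comparison principle for viscosity supersolutions of Pucci inequalities with small first-order term (a direct consequence of the Alexandrov--Bakelman--Pucci estimate) gives $u \ge \varphi$ throughout $A$. Since $x_0$ lies on the axis $\{x' = x_0'\}$ directly below $y_0$, we have $|x_0 - y_0| = r - x_{0,d}$ and $r^2 - (r-x_{0,d})^2 = x_{0,d}(2r - x_{0,d}) \ge r\, x_{0,d}$, so applying the elementary inequality $e^t - 1 \ge t$ for $t \ge 0$ to the difference of exponentials yields
\[
\varphi(x_0) = \eta\, e^{-\alpha r^2}\bigl(e^{\alpha(r^2 - (r-x_{0,d})^2)} - 1\bigr) \ge \eta\, \alpha r\, e^{-\alpha r^2}\, x_{0,d} \ge c_0 m\, x_{0,d}.
\]
Thus $u(x_0) \ge \varphi(x_0) \ge c_0 m\, x_{0,d}$, which together with the deep case settles the claim.

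The main technical obstacle, and the reason for the smallness hypothesis $|K| \le \varepsilon_0$, is the coupling of parameters in the barrier: $\alpha$ must be taken large to make the tangential negative eigenvalues of $D^2\varphi$ dominated by the radial positive one, and hence to force $M^-(D^2\varphi) > 0$; but $|\nabla\varphi|$ scales linearly in $\alpha$ as well, so the only way to absorb the first-order term $K|\nabla\varphi|$ is to choose $\varepsilon_0$ small \emph{after} $\alpha$ has already been pinned down by $d, \lambda, \Lambda$. The same coupling explains why the $-C_0 L$ correction in the statement is unavoidable with this type of barrier.
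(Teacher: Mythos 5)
Your overall structure matches the paper's exactly: interior Harnack to propagate $u(e)$ onto a compact slab $\Sigma$, then a tangent-annulus barrier and comparison, then read off the linear lower bound from $\partial_d\varphi$ at the foot point. The only cosmetic difference is the barrier itself --- you use the exponential $e^{-\alpha|x-y_0|^2}$ while the paper uses the power $|y-z_0|^{-p}$ --- and both are standard and work equally well once the radial/tangential eigenvalue competition is resolved by choosing the parameter large.

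There is, however, a real gap in how you handle the inhomogeneity $L$, and it is the one place where your proof and the paper's diverge substantively. The paper's barrier contains a \emph{second piece}, $\frac{L}{\lambda d}(|y-z_0|^2-1/16)$, whose sole purpose is to make $M^-(D^2\Psi)-K|\nabla\Psi|\geq L$ hold independently of how small $c_1u(e)-C_1L$ may be: they get $M^-(D^2\Psi)\geq c_2(c_1 u(e)-C_1 L)+2L\geq 2L$ as long as $u(e)>C_0L>(C_1/c_1)L$. Your barrier has only the term scaled by $\eta=c_1 m$, so the subsolution inequality forces $c_4\alpha^2 c_1 m\,e^{-\alpha r^2}\geq L$, i.e.\ $m\geq c_6 L$ for a fixed $c_6(d,\lambda,\Lambda,K)$. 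Your repair --- ``enlarging $C_0$ further if necessary \dots (equivalently, so that $m>0$ forces $m$ to exceed a dimensional multiple of $L$)'' --- does not work as written: with $m:=u(e)-C_0L$ fixed once and for all, no choice of $C_0$ makes $m>0$ imply $m\geq c_6L$, since one can always take $u(e)$ just barely above $C_0L$. Indeed, enlarging $C_0$ \emph{shrinks} $m$, pushing the subsolution inequality the wrong way. The correct fix is to decouple the two constants: run the barrier with $\eta:=c_1\bigl(u(e)-C_1L\bigr)$ where $C_1$ is the Harnack constant (so that $\eta\leq\inf_\Sigma u$), then set $C_0:=C_1+c_6$; in the nontrivial case $u(e)>C_0L$ one has $u(e)-C_1L>c_6L$, the barrier is a genuine subsolution, and the final bound $u\geq c_0(u(e)-C_1L)x_d\geq c_0(u(e)-C_0L)x_d$ follows. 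Alternatively, just graft the paper's quadratic $L$-term onto your exponential barrier. Either way the fix is short, but the argument as you wrote it does not close.

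One small stylistic remark: your gradient bound $|\nabla\varphi|\leq 2\alpha r\eta$ drops the factor $e^{-\alpha|x-y_0|^2}$, which makes it look as though the first-order term might dominate $M^-(D^2\varphi)\gtrsim\alpha^2\eta e^{-\alpha r^2}$ as $\alpha\to\infty$. It does not, because the exponentials in both terms cancel when compared pointwise; but since $\alpha$ is fixed before $\varepsilon_0$ is chosen, your crude bound still suffices.
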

\begin{proof}
This proposition is a quantitative version of Hopf's lemma in terms of extremal equations with nontrivial right-hand sides.

All constants $c,C$ with varying indices that appear below depend on $d,\lambda,\Lambda$ only. Observe that if $u(e) \le C_0L$ then we have nothing to prove. So in what follows we assume $u(e) > C_0L$ (the constant $C_0$ will be determined below).

We can assume $\eps_0\le 1$. From the interior Harnack inequality, Proposition \ref{p:harnack}, we know that for some $c_1,C_1>0$
\[ u \geq c_1 u(e) - C_1 L \qquad \text{in } B_{99/100} \cap \{x_d > 1/16\}.\]
We assume $C_0$ is chosen so that $C_0>C_1/c_1$.

Fix $x \in B_{1/4}^+$, $x = (x',x_d)$. Set $z_0 = (x',1/4)$. We define the following barrier function
\[ \Psi(y) = (c_1 u(e)-C_1 L) \left( \frac{|y-z_0|^{-p} - (1/4)^{-p}}{(1/8)^{-p} - (1/4)^{-p}}\right) + \frac{L}{\lambda d} (|y-z_0|^2 - 1/16) ,\]
where $p = 2p^*$ and $p^*=\frac \Lambda \lambda (d-1) - 1$ is the usual power such that the minimal Pucci operator vanishes when evaluated at the Hessian of $|y|^{-p^*}$, $y\not=0$. Then $\mathcal{M}_{\lambda, \Lambda}^-(D^2|y-z_0|^{-p}) \ge c(p)>0$ in $B_{1/4}(z_0) \setminus B_{1/8}(z_0)$, and the function $\Psi$  satisfies the inequalities
\begin{align*}
 \mathcal{M}_{\lambda, \Lambda}^-(D^2 \Psi) \geq    c_2(c_1 u(e)-C_1 L) +2L\geq 2L\qquad \text{ in } B_{1/4}(z_0) \setminus B_{1/8}(z_0),\\
|\grad \Psi| \leq C_2(c_1 u(e)-C_1 L) +C_3L\qquad \text{ in } B_{1/4}(z_0) \setminus B_{1/8}(z_0),\\
\Psi \leq c_1 u(e)- C_1 L \leq u \qquad \text{ on } \partial B_{1/8}(z_0),\\
\Psi = 0 \leq u  \qquad \text{ on } \partial B_{1/4}(z_0).
\end{align*}
 Therefore if
 $\eps_0$ is small enough (smaller than $c_2/(2C_2)$, $1/(2C_3)$), in the annulus $B_{1/4}(z_0) \setminus B_{1/8}(z_0)$ we have $\frac 12 \mathcal{M}_{\lambda, \Lambda}^-(D^2\Psi) \geq K |\grad \Psi|$ and
\[ \mathcal{M}_{\lambda, \Lambda}^-(D^2 \Psi) - K |\grad \Psi| \geq \frac 12 \mathcal{M}_{\lambda, \Lambda}^-(D^2\Psi)\geq L\geq \mathcal{M}_{\lambda, \Lambda}^-(D^2 u) - K |\grad u|.\]
From the comparison principle $u\geq \Psi $ in $B_{1/4}(z_0) \setminus B_{1/8}(z_0)$.

Finally, observe that
$$
\frac{\partial\Psi}{\partial x_d}(x^\prime,0)\geq c_4(c_1 u(e)-C_1 L) - C_4L\geq \frac{c_4}{2}(c_1 u(e)-C_1 L),
$$
where the last inequality is ensured by using $L\leq u(e)/C_0$ and by taking $C_0$ sufficiently large.

The proof is thus finished.
\end{proof}

Finally, we recall the following Krylov-Safonov global H\"older estimate for solutions of~$(S^*)$.

\begin{prop}[global H\"older estimate]
\label{p:holder}
Let $u$ be a solution of \eqref{princineq} in $B_1^+$. There exist positive constants $\alpha_0$, $\rho_0$ and $C$ depending on $d,\lambda,\Lambda,K$, and $L$, such that for all $\rho\in (0,\rho_0)$ and any ball $B_\rho(x)$, $x\in \overline{B_1^+}$, we have
$$
\osc_{B_\rho(x)\cap B_1^+}u\le C\left( \rho^{\alpha_0} + \osc_{B_{\sqrt{\rho}}(x)\cap \partial B_1^+} u\right).
$$
\end{prop}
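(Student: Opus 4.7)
The plan is to establish a boundary oscillation decay estimate, iterate it from scale $\sqrt{\rho}$ down to scale $\rho$, and combine it with the interior Krylov--Safonov H\"older estimate to cover general points $x\in\overline{B_1^+}$.

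For a boundary point $x_0\in\partial B_1^+$ and small $r>0$, set $\omega(r):=\osc_{B_r(x_0)\cap B_1^+}u$ and $\Omega(r):=\osc_{B_r(x_0)\cap\partial B_1^+}u$. The key step is to establish a one-step decay of the form
\begin{equation*}
\omega(\theta r)\le \mu\,\omega(r)+(1-\mu)\,\Omega(r)+CLr^2
\end{equation*}
for some $\theta\in(0,1/2)$ and $\mu\in(0,1)$ depending only on $d,\lambda,\Lambda,K$. To see this, let $M,m$ denote the supremum and infimum of $u$ over $B_r(x_0)\cap B_1^+$ and let $e_r:=x_0+(r/2)\nu$ be an interior reference point. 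By the dichotomy $u(e_r)\ge(M+m)/2$ versus the opposite, we may assume the function $v:=M-u\ge 0$ satisfies $v(e_r)\le\omega(r)/2$. The function $v$ does not vanish on the flat portion of $\partial B_r^+(x_0)$ but has oscillation there bounded by $\Omega(r)$; comparing $v$ from below with a barrier that matches its boundary values up to a controlled error and invoking a Hopf-type bound akin to Proposition \ref{p:hopf} yields $\inf_{B_{\theta r}^+(x_0)}v\ge c\,v(e_r)-C\Omega(r)-CLr^2$. Rewriting in terms of $u$ gives the claimed oscillation decay.

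Once the one-step decay is available, iterating it $k\asymp\log(1/\rho)/(2\log(1/\theta))$ times starting at scale $R=\sqrt{\rho}$, and using monotonicity of $\Omega$ to control the geometric sum of intermediate terms, produces
\begin{equation*}
\omega(\rho)\le \mu^k\,\omega(\sqrt{\rho})+C\,\Omega(\sqrt{\rho})+CL\rho\le C\rho^{\alpha_0}\bigl(\|u\|_{L^\infty}+L\bigr)+C\,\Omega(\sqrt{\rho}),
\end{equation*}
with $\alpha_0:=\log(1/\mu)/(2\log(1/\theta))>0$; this is the desired estimate at a boundary point. For a general $x\in\overline{B_1^+}$, split into two cases: if $\dist(x,\partial B_1^+)\ge\sqrt{\rho}$, the ball $B_{\sqrt{\rho}}(x)$ is interior and the interior Krylov--Safonov H\"older estimate on $B_{\sqrt{\rho}}(x)$ yields $\osc_{B_\rho(x)}u\le C(\rho/\sqrt{\rho})^{\alpha'}(\|u\|_{L^\infty}+L)$; if $\dist(x,\partial B_1^+)<\sqrt{\rho}$, pick $x_0\in\partial B_1^+$ with $|x-x_0|\le\sqrt{\rho}$ so that $B_\rho(x)\subset B_{2\sqrt{\rho}}(x_0)$, and apply the boundary estimate.

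The main technical obstacle is the one-step boundary oscillation decay, and specifically the fact that the auxiliary function $v$ does not vanish on the boundary. The ``splitting'' of solutions of $(S^*)$ into a part with zero boundary data and a part carrying the prescribed boundary data is unavailable---this is the open problem raised in the introduction---so a direct reduction to the zero-boundary-data case of Proposition \ref{p:harnackglobal} is not possible. The workaround is an explicit barrier construction that plays the role of the missing harmonic extension of the boundary values.
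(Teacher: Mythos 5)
The paper does not actually prove Proposition \ref{p:holder}; it cites Theorem 2 of \cite{Si} (and notes that the proof there works for the class $S^*$). Your proposal is therefore a genuine attempt at a self-contained reconstruction along the expected lines: a one-step oscillation decay at boundary points, an iteration from scale $\sqrt{\rho}$ down to $\rho$, and the interior Krylov--Safonov estimate for interior points. The iteration and the interior/boundary case split are fine, and the $\sqrt{\rho}$ device is exactly what makes the statement hold without any modulus assumption on the boundary data.

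The problem is the one-step boundary decay, which is where all the content lies. First, a small sign slip: with $u(e_r)\ge(M+m)/2$ one should work with $v=u-m$ (so that $v(e_r)\ge\omega(r)/2$); the function $v=M-u$ has a \emph{small} value at $e_r$ in that case, and the chain ``$\inf v\ge c\,v(e_r)-\cdots$, hence $\omega(\theta r)\le\omega(r)-c\,v(e_r)+\cdots$'' then gives no improvement. Second, and more seriously, the claimed intermediate estimate
\begin{equation*}
\inf_{B_{\theta r}^+(x_0)}v\ \ge\ c\,v(e_r)-C\,\Omega(r)-CLr^2
\end{equation*}
is false as stated, regardless of how the barrier is built. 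Take $v=u-m$ with $v(e_r)\ge\omega(r)/2$, and suppose $u$ attains $m$ at a boundary point of $B_{\theta r}^0(x_0)$ (e.g.\ $g(x_0)=m$), which is perfectly compatible with $\Omega(r)\ll\omega(r)$. Then the left-hand side is $0$, while the right-hand side is $\ge c\,\omega(r)/2-C\Omega(r)-CLr^2>0$ once $\Omega(r)$ and $Lr^2$ are small compared to $\omega(r)$. Proposition \ref{p:hopf} and Proposition \ref{p:harnackglobal} both require the function to vanish on the flat boundary; since $v$ merely satisfies $v\ge 0$ there and can actually take the value $0$, no Hopf-type \emph{positive} lower bound near $x_0$ can hold, and no barrier can rescue it, because any admissible barrier must lie below $v$ on $B_r^0$ and hence gives nothing better than $\inf v\ge 0$. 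This is not a packaging issue: the whole logic of ``raise $\inf u$ near $x_0$'' is the wrong lever here.

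The decay estimate $\omega(\theta r)\le\mu\,\omega(r)+C\,\Omega(r)+CLr^2$ is nevertheless true, and the correct mechanism is a \emph{one-sided truncation-plus-barrier} argument, not Hopf/Harnack. Set $a=\sup_{B_r^0(x_0)}g$ and $a'=\inf_{B_r^0(x_0)}g$, so $a-a'=\Omega(r)$. The function $(u-a)^+$ is a nonnegative viscosity subsolution of $\mathcal{M}^+(D^2\,\cdot)+K|D\,\cdot|\ge-L$, vanishes on $B_r^0(x_0)$, and is bounded by $\omega(r)$; comparing it in $B_{r/2}^+(x_0)$ with a fixed supersolution barrier that vanishes on the flat boundary, dominates $\omega(r)$ on the spherical part, and grows at most linearly in $x_d$ near $x_0$ (together with a quadratic correction absorbing $L$), one obtains $u\le a+C(\omega(r)/r+Lr)\,x_d$ in $B_{r/2}^+(x_0)$. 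The symmetric argument on $(a'-u)^+$ gives the matching lower bound, and subtracting yields $\osc_{B_{\theta r}^+(x_0)}u\le\Omega(r)+2C\theta\,\omega(r)+CLr^2$, hence the decay for $\theta$ small. Note this needs only comparison with a supersolution (a one-sided fact applicable to subsolutions), precisely sidestepping the ``splitting'' obstruction you correctly identify. With this replacement for the one-step decay, the rest of your outline goes through.
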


\begin{proof} This follows from Theorem 2 in \cite{Si}. Observe that theorem is stated for solutions of fully nonlinear equations, however its proof is given for solutions of $(S^*)$ (see also the remark on page 603 of that paper).
\end{proof}

\section{Boundary $C^{1,\alpha}$-regularity for the class $S^*$} 


\begin{lemma} \label{l:flatboundaryharnack}
Let $u$ be a solution of \eqref{princineq} in $B_1^+$ which vanishes on $B_1^0$, and  $\|u\|_{L^\infty(B_1^+)} \leq 1$. Then we can find $A \in \R$ (representing the "normal derivative" of $u$ at the origin) such that for all $x \in B_1^+$,
\begin{equation}\label{lemloc1} |u(x) - A x_d| \leq C(1+L) |x|^{1+\alpha} \qquad \mbox{and }\quad|A| \leq C(1+L).
\end{equation}
The positive constant  $\alpha$ depends on $d,\lambda,\Lambda$ only, and $C=C(d,\lambda,\Lambda, K)$.
\end{lemma}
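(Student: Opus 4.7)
The plan is to show that $u(x)/x_d$ has a H\"older-continuous limit at the origin via a Krylov--Safonov-type improvement-of-oscillation argument, with the Hopf principle (Proposition \ref{p:hopf}) supplying the geometric decay. First, using the rescaling $u_r(x)=u(rx)$ described in \re{generic}, I would reduce to the case $K\le \eps_0$, where $\eps_0$ is the smallness constant needed to apply Hopf; the resulting $r$-dependent factors are of controlled size and will be absorbed into the final constant $C=C(d,\lambda,\Lambda,K)$.

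For $r\in(0,1/2]$ set $M(r)=\sup_{B_r^+}u(x)/x_d$, $m(r)=\inf_{B_r^+}u(x)/x_d$, and $\omega(r)=M(r)-m(r)$. The Lipschitz estimate (Proposition \ref{p:lipschitz}) applied to $\pm u$ gives $|M(r)|,|m(r)|\le C(1+L)$. The two nonnegative functions $v_1=u-m(r)x_d$ and $v_2=M(r)x_d-u$ vanish on $B_r^0$ and still solve $(S^*)$ with the same $K$ and with $L$ inflated by $K(|M(r)|+|m(r)|)$, hence bounded by $L'\le C(1+L)$. Evaluating at $re=(0,\dots,0,r/2)$ gives $v_1(re)+v_2(re)=\omega(r)\,r/2$, so one of them, say $v_1$, obeys $v_1(re)\ge \omega(r)\,r/4$. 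Rescaling $v_1$ to $B_1^+$ and invoking Hopf, then undoing the rescaling, produces
\[ \omega(r/2)\le (1-c_0/4)\,\omega(r)+C(1+L)\,r. \]

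A standard dyadic iteration of this recurrence yields $\omega(r)\le C(1+L)\,r^\alpha$ for some $\alpha=\alpha(d,\lambda,\Lambda)\in(0,1]$. Consequently $M(r)$ and $m(r)$ are monotone and converge to a common limit $A$ with $|A|\le C(1+L)$, and for any $x\in B_{1/2}^+$, taking $r=|x|$,
\[ |u(x)-Ax_d|=x_d\bigl|u(x)/x_d-A\bigr|\le C(1+L)\,x_d\,|x|^\alpha\le C(1+L)\,|x|^{1+\alpha}. \]
The extension to $x\in B_1^+\setminus B_{1/2}^+$ is trivial since there $|x|\gtrsim 1$ and the left-hand side is bounded by $|u(x)|+|A|x_d\le C(1+L)$.

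The main obstacle will be the bookkeeping around the oscillation step: I need to confirm that the two auxiliary functions $v_1$ and $v_2$, obtained by subtracting a linear function from $u$, genuinely remain in the $(S^*)$ class (with the same $K$ but an inflated $L'$), and then track how the Hopf principle's constants transform under the rescaling $B_r^+\to B_1^+$ so that the smallness condition $K\le\eps_0$ is preserved at every scale. Once this is organized and the recurrence for $\omega$ is in hand, the rest of the argument is the classical Campanato-style conclusion.
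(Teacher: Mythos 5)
Your proposal is correct and follows essentially the same strategy as the paper: the paper's increasing/decreasing sequences $V_k,U_k$ with $V_k x_d\le u\le U_k x_d$ in $B_{2^{-k}}^+$ are exactly your $m(2^{-k}),M(2^{-k})$, and both arguments close the dyadic iteration by applying the Hopf principle to whichever of $u-m(r)x_d$ or $M(r)x_d-u$ is large at the midpoint $re$. The only cosmetic difference is that the paper tunes the starting scale $k_0$ so the $L$-term is absorbed into a multiplicative factor, whereas you carry it as the additive term $C(1+L)r$ in the recurrence and invoke the standard Campanato iteration — both handle the inhomogeneity correctly.
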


\begin{proof}
Replacing $u$ by $u/(1+L)$, we can assume that $L\le 1$.
We will construct an increasing sequence $V_k$ and a decreasing sequence $U_k$ so that for $r_k = 2^{-k}$, $k\ge 1$, we have
\begin{equation} \label{e:sandwitch}  V_k x_d \leq u(x) \leq U_k x_d \qquad \text{in } B_{r_k},
\end{equation}
and also
\begin{equation} \label{e:slopevariation}  U_k - V_k \leq M r_k^{\alpha},
\end{equation}
for constants $\alpha$ and $M$ which depend on the right quantities and will be determined later. The statement of the lemma easily follows from this construction by taking $A = \lim_{k \to \infty} V_k = \lim_{k \to \infty} U_k$, since for each $x \in B_{1/2}^+$ we can choose $k$ so that $r_{k+1}<|x|\le r_k$, and then \re{e:sandwitch} and \re{e:slopevariation} imply
$$
u(x) - Ax_d\le (U_k-A)x_d \le Mr_k^\alpha x_d= 2^\alpha M r_{k+1}^\alpha x_d\le 2^\alpha M |x|^{1+\alpha},
$$
and similarly $u(x) - Ax_d \ge -2^\alpha M |x|^{1+\alpha}$. If $x\in B_{1}^+\setminus B_{1/2}^+$, then \re{lemloc1} is obvious.

As a first step in the construction of the sequences $V_k$ and $U_k$,
 we obtain $V_1$ and $U_1$ from the Lipschitz estimate, Proposition \ref{p:lipschitz}. In this case we can take
$U_1 =  2\bar C$ and $V_1 = -2\bar C$, where $\bar C$ is the constant $C$ from Proposition \ref{p:lipschitz} (recall $|u|\le 1$ and $L\le1$). At this point we fix the constant $M$ as follows:
$$
M= 4\bar C r_{k_0}^{-\alpha},\quad\mbox{ where } k_0 \mbox{ is fixed so that } \quad r_{k_0}K\le \eps_0\mbox{ and } r_{k_0}^{1/2}(1+2\bar C)<\eps_1,
$$
where $\eps_0$ is the constant from Proposition \ref{p:hopf} and $\eps_1\in (0,1)$ will be chosen below.

Hence we can take $V_1 = V_2 = \dots =V_{k_0}$ and $U_1 = U_2 = \dots = U_{k_0}$, and we only need to construct $V_k$ and $U_k$ satisfying \eqref{e:sandwitch} and \eqref{e:slopevariation} for $k>k_0$.

Assume  that we have constructed all members of the sequences $V_j$ and $U_j$  up to the level $j=k$ ($k\geq k_0$ for the reason explained above). Let us now construct $V_{k+1}$ and $U_{k+1}$.

Since $V_1 \leq V_k \leq U_k \leq U_1$, we know that $|V_k|$ and $|U_k|$ are bounded by $2\bar C$.
Note that if $U_k-V_k \le M r_{k+1}^\alpha$, then we can take $V_{k+1} = V_k$ and $U_{k+1} = U_k$. So we can assume that we have $$U_k - V_k > M r_{k+1}^\alpha.$$

Set $e_k = (0,\dots,0,r_{k+1})$. From \eqref{e:sandwitch} we get
\[ V_k r_{k+1} \leq u(e_k) \leq U_k r_{k+1}.\]
We now distinguish two cases, either $u(e_k) \geq r_{k+1}(V_k+U_k)/2 $ or not. Let us first assume the former.

We introduce the following rescaled function
\[ v_k(x) = r_k^{-1-\alpha} (u(r_k x) - V_k r_k x_d),\quad x\in B_1,\]
that is, $u(x) = V_kx_d+ r_k^{1+\alpha}v_k(x/r_k)$, $x\in B_{r_k}$.

Since $V_k x_d \leq u(x) \leq U_k x_d$ in $B_{r_k}^+$, we have by \eqref{e:slopevariation}
\[ 0 \leq v_k \leq \frac{U_k-V_k}{r_k^\alpha} x_d \leq M x_d \ \text{ in }B_1^+.\]
Moreover, $v_k$ satisfies the following scaled version of \eqref{princineq}
\[
\begin{aligned}
M^+(D^2 v_k ) + r_k K |\grad v_k | + r_k^{1-\alpha} L (1 + V_k) &\geq 0 \qquad \text{ in } B_1^+, \\
M^-(D^2 v_k ) - r_k K |\grad v_k | - r_k^{1-\alpha} L (1 + V_k) &\leq 0 \qquad \text{ in } B_1^+, \\
u &= 0  \qquad \text{ on } B_1^0.
\end{aligned}
\]

The constant $\alpha$ will be chosen small enough, so we can assume $\alpha<1/2$. By the choice of $k_0$ and $k>k_0$ we have $r_k^{1-\alpha} L (1 + V_k)\le r_{k_0}^{1/2}(1+2\bar C)\le \eps_1<1$, and  $r_kK\le\eps_0$. Therefore we can apply Proposition \ref{p:hopf} and obtain
\beeq\label{loc2} v_k(x) \geq  c_0 (v_k(e_0) -C_0 \eps_1) x_d  \ \text{ in } B_{1/2}^+. \eeq

Recalling that $r_k=2^{-k}$, $u(e_k) \geq r_{k+1}(V_k+U_k)/2 $ and $U_k - V_k > M r_{k+1}^\alpha$, we see that
\[ v_k(e_0) = r_k^{-1-\alpha} (u(e_k) - V_k r_{k+1}) \geq \frac{r_k^{-\alpha}}{2}  \frac{U_k-V_k}{2} \geq  M 2^{-\alpha-1} \geq \frac{M}{4}.\]

Now we choose $\eps_1=M/(8C_0)$, to obtain from \re{loc2}
\[ v_k(x) \geq c_0 \frac M 8 x_d  \ \text{ in } B_{1/2}^+. \]
 In terms of the original scale, this means that
\[ u(x) \geq (V_k + c_0\frac M 8 r_k^\alpha ) x_d \ \text{ in } B_{r_{k+1}}^+.\]

By the induction hypothesis $U_k-V_k \leq M r_k^\alpha $,  hence
 \[ u(x) \geq (V_k + \frac{c_0}{8} (U_k - V_k) ) x_d \ \text{ in } B_{r_{k+1}}^+.\]
We now choose $U_{k+1} = U_k$ and $V_{k+1} = V_k + \frac{c_0}{8} (U_k - V_k)$. Finally, the constant $\alpha$ is chosen so that $2^{-\alpha} = (1-c_0/8)$. In this way we have
\[ U_{k+1} - V_{k+1} = (1-c_0/8) (U_k-V_k) = 2^{-\alpha} (U_k-V_k) \leq M (r_k/2)^{\alpha} =Mr_{k+1}^\alpha,\]
and we finish the iterative step.

In the case that $u(e_k) < r_{k+1}(V_k+U_k)/2 $ we proceed in a similar way, by using the scaled function
\[ v_k(x) = r_k^{-1-\alpha} (U_k r_k x_d - u(r_k x)),\]
to obtain \eqref{e:sandwitch} with $V_{k+1} = V_k$ and $U_{k+1} = U_k - \frac{c_0}{8} (U_k-V_k)$.
\end{proof}

\begin{thm} \label{t:flatboundarysstar}
Let $u$ be a solution of \eqref{princineq} in $B_1^+$ which vanishes on $B_1^0$. Then there exist $\alpha>0$ and a function $A\in C^{\alpha} (B^0_{1/2})$ such that for every $x_0\in B_{1/2}^0$,  $x \in B_1^+$ we have
\begin{equation} \label{e:fbh} \|A\|_{C^\alpha(B_{1/2}^0)} \leq CW, \qquad\mbox{and}\qquad |u(x) - A(x_0) x_d| \leq C W|x-x_0|^{1+\alpha},
\end{equation}
where
$$
W:= \|u\|_{L^\infty(B_1^+)} + L.
$$
Here $\alpha=\alpha(d,\lambda,\Lambda)$, and $C=C(d,\lambda,\Lambda, K)$.
\end{thm}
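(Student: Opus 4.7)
The plan is to deduce Theorem \ref{t:flatboundarysstar} from Lemma \ref{l:flatboundaryharnack} applied at each boundary point, combined with a two-point comparison that produces the $C^\alpha$ modulus of the slope function $A$. Thus I fix $x_0\in B^0_{1/2}$ and consider the shifted, rescaled function $\tilde u(y):=u(x_0+y/2)$, which is defined on $B_1^+$, vanishes on $B_1^0$, and satisfies \re{princineq} with $K,L$ replaced by $K/2$ and $L/4$, by the rescaling observed in \re{e:mainwitheps}. After the normalization $u\mapsto u/W$, Lemma \ref{l:flatboundaryharnack} applied to $\tilde u$ (and then unwrapped) yields a number $A(x_0)\in\R$ with $|A(x_0)|\le CW$ and
\begin{equation}\label{p:local}
|u(x)-A(x_0)x_d|\le CW|x-x_0|^{1+\alpha}\qquad\text{for every } x\in B_1^+\cap B_{1/2}(x_0),
\end{equation}
where I used $(x_0)_d=0$ so that $(x-x_0)_d=x_d$.

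Second, to bound the increments of $A$, pick $x_0,y_0\in B^0_{1/2}$ and set $r:=|x_0-y_0|$. If $r\ge 1/4$, the estimate $|A(x_0)-A(y_0)|\le CWr^\alpha$ is immediate from $|A(x_0)|,|A(y_0)|\le CW$. If $r<1/4$, the test point $z:=x_0+(0,\dots,0,r)$ satisfies $z_d=r$, $|z-x_0|=r$, and $|z-y_0|\le 2r$, and lies in both $B_1^+\cap B_{1/2}(x_0)$ and $B_1^+\cap B_{1/2}(y_0)$. Applying \eqref{p:local} once at $x_0$ and once at $y_0$, subtracting, and using the triangle inequality gives
$$r\,|A(x_0)-A(y_0)|\le CW\,|z-x_0|^{1+\alpha}+CW\,|z-y_0|^{1+\alpha}\le C'W r^{1+\alpha},$$
hence $|A(x_0)-A(y_0)|\le C'Wr^\alpha$, establishing the $C^\alpha$ bound on $A$ in \re{e:fbh}.

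Finally, the pointwise expansion in \re{e:fbh} coincides with \eqref{p:local} when $|x-x_0|<1/2$; when $|x-x_0|\ge 1/2$, it reduces to the trivial bound $|u(x)-A(x_0)x_d|\le \|u\|_{L^\infty(B_1^+)}+|A(x_0)|\le(1+C)W\le C''W|x-x_0|^{1+\alpha}$, since $|x-x_0|^{1+\alpha}\ge 2^{-(1+\alpha)}$. The only point requiring attention is the scaling bookkeeping in the first step: one must check that translation plus the rescaling \re{e:mainwitheps} preserves the $S^*$-class with constants uniform in $x_0$, and that the normalization $u\mapsto u/W$ correctly absorbs both $\|u\|_{L^\infty(B_1^+)}$ and $L$ into the single quantity $W$. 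Both are routine, and no analytic obstacle arises beyond what is already contained in Lemma \ref{l:flatboundaryharnack}.
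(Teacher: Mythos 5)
Your proof is correct and follows essentially the same strategy as the paper's: normalize $u$ by $W$, apply Lemma~\ref{l:flatboundaryharnack} after translating and rescaling to obtain a local linear expansion with slope $A(x_0)$ at each boundary point, and then derive the H\"older modulus of $A$ by comparing the two expansions at a single interior test point whose height $z_d$ is comparable to $|x_0-y_0|$. The only cosmetic differences are that you make the rescaling $\tilde u(y)=u(x_0+y/2)$ explicit (which the paper hides behind ``appropriately translated''), and you take the concrete test point $z=x_0+|x_0-y_0|\,e_d$ rather than an abstract $y$ with $y_d>r/4$; both choices serve the same purpose and yield the same estimate.
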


\begin{proof} Replacing $u$ by $u/W$ we can assume that
$\|u\|_{L^\infty(B_1^+)} \leq 1$ and $L\le 1$.
For each $x_0\in B^0_{1/2}$, we know that there is a constant $A(x_0)$ for which the second inequality in \eqref{e:fbh} holds, and $|A(x_0)| \leq C$. This follows from an application of Lemma \ref{l:flatboundaryharnack} appropriately translated to~$x_0$.

We now have to prove that $A(x)$ is H\"older continuous on $B^0_{1/2}$, with bounded norm. Let $x_1, x_2 \in B_{1/2}^0$ and $r = 2|x_1 -x_2|$. We assume without loss of generality that $r<1/4$. Fix a point  $y \in B_r^+(x_1) \cap B_r^+(x_2)$ with $y_d > r/4$. We have
\begin{align*}
|u(y) - A(x_1) y_d| \leq C |x_1-y|^{1+\alpha} \leq C r^{1+\alpha} = C |x_1 - x_2|^{1+\alpha}, \\
|u(y) - A(x_2) y_d| \leq C |x_1-y|^{1+\alpha} \leq C r^{1+\alpha} = C |x_1 - x_2|^{1+\alpha}.
\end{align*}
Then
\[ \begin{aligned}
|A(x_1) - A(x_2)| &\leq 4r^{-1} |(A(x_1) - A(x_2)) y_d| \\
&\leq 4r^{-1} |A(x_1) y_d - u(y)| + 4r^{-1} |u(y) - A(x_2) y_d|  \\
&\leq C |x_1-x_2|^{\alpha}.
\end{aligned}\]

\end{proof}

\begin{remark} Of course if $u\in C^1$ in a neighbourhood of $B_1^0$ then $A= Du|_{B_{1/2}^0}$. Recall however that there exist functions which satisfy \re{princineq} and are not $C^1$ in the interior of $B_1^+$.
\end{remark}


In Theorem \ref{t:flatboundarysstar} we proved a boundary gradient H\"older estimate for functions which satisfy $(S^*)$, and vanish on the boundary. We will now extend this to arbitrary $C^{1,\alpha}$-boundary data.

We first prove the following lemma.
\begin{lemma} \label{l:boundaryperturbation}
There exists $\gamma_1>0$ such that for every $\gamma\in (0,\gamma_1)$ we can find $\delta>0$  such that if $u$ is a viscosity solution of \re{princineq} in $B_1^+$ with
\[
\|u\|_{L^\infty(B_1^+)}\le 1\qquad \mbox{and}\qquad \|u\|_{L^\infty(B_1^0)}\le \delta,
\]
 then there exist $A \in \R$ such that
\beeq\label{loc3}
 |A| \leq C_1\qquad \mbox{and}\qquad|u(x) - Ax_d| \leq \gamma^{1+\alpha_1} \ \text{ for all }x \in B_\gamma^+.
 \eeq
The positive constants $\gamma_1, \alpha_1,C_1$ are such that $\alpha_1=\alpha_1(d,\lambda,\Lambda)$, and $\gamma_1, C_1$ depend only on $d$, $\lambda$, $\Lambda$, $K$ (but not on $\gamma$ or $\delta$).
\end{lemma}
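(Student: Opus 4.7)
The plan is a normalization-and-compactness argument by contradiction, pivoting on Theorem \ref{t:flatboundarysstar}. The idea is that as $\delta\to 0$ a solution of $(S^*)$ with nearly vanishing boundary trace is forced to be close, in $B_1^+$, to some $(S^*)$-solution that actually vanishes on $B_1^0$, and for the latter the linear expansion at the origin is already available.

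First I would fix the exponents. Let $\alpha=\alpha(d,\lambda,\Lambda)$ be the exponent from Theorem \ref{t:flatboundarysstar}, and put $\alpha_1=\alpha/2$. Let $C=C(d,\lambda,\Lambda,K)$ be the constant in that theorem applied to a normalized $(S^*)$-solution with $W\le 2$ (by the scaling remark \eqref{generic} I may assume $L\le \eps_0$, otherwise $L$ is simply absorbed into a modified smallness hypothesis on $\delta$). I then choose $\gamma_1\le (2C)^{-2/\alpha}$, so that
\[
C\gamma^{1+\alpha}\le \tfrac{1}{2}\gamma^{1+\alpha_1}\qquad\text{for every }\gamma\in(0,\gamma_1),
\]
and set $C_1=C$.

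Second, suppose to the contrary that for some $\gamma\in(0,\gamma_1)$ no $\delta$ works: there is a sequence $u_n$ of viscosity solutions of \eqref{princineq} in $B_1^+$ with $\|u_n\|_{L^\infty(B_1^+)}\le 1$ and $\|u_n\|_{L^\infty(B_1^0)}\le 1/n$, such that no $A$ with $|A|\le C_1$ satisfies $\sup_{B_\gamma^+}|u_n(x)-Ax_d|\le \gamma^{1+\alpha_1}$. By the global H\"older estimate of Proposition \ref{p:holder}, the sequence $u_n$ is locally uniformly H\"older continuous on $B_1^+\cup B_1^0$; passing to a subsequence, $u_n\to u_\infty$ locally uniformly there. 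By stability of viscosity solutions under uniform convergence, $u_\infty$ still solves $(S^*)$ in $B_1^+$, and the bound on the traces forces $u_\infty\equiv 0$ on $B_1^0$. Applying Theorem \ref{t:flatboundarysstar} to $u_\infty$ at the origin yields $A_\infty\in\R$ with $|A_\infty|\le C$ and $|u_\infty(x)-A_\infty x_d|\le C|x|^{1+\alpha}$ for $x\in B_{1/2}^+$.

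For all sufficiently large $n$ the uniform convergence gives $\|u_n-u_\infty\|_{L^\infty(B_\gamma^+)}\le \tfrac{1}{2}\gamma^{1+\alpha_1}$, whence, using the choice of $\gamma_1$,
\[
|u_n(x)-A_\infty x_d|\le \tfrac{1}{2}\gamma^{1+\alpha_1}+C\gamma^{1+\alpha}\le \gamma^{1+\alpha_1}\qquad\text{for }x\in B_\gamma^+,
\]
with $|A_\infty|\le C_1$. This contradicts the assumption and proves the lemma.

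The main obstacle is conceptual rather than technical: the lack of a ``splitting" for $(S^*)$ (the open problem discussed in the introduction) prevents the direct subtraction of the boundary data, so one cannot reduce to the zero-boundary case by a linear argument. The compactness step above replaces splitting by a qualitative approximation, at the cost of losing some exponent, which is precisely why $\alpha_1$ must be chosen strictly smaller than the $\alpha$ produced by Theorem \ref{t:flatboundarysstar} and why $\gamma_1$ has to be fixed before $\gamma$ (and hence before $\delta$).
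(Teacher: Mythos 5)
Your proof is correct and follows essentially the same route as the paper's: fix $\alpha_1<\alpha$ and $\gamma_1$ so that the zero-boundary estimate from Theorem \ref{t:flatboundarysstar} gives the bound with half the budget, then argue by contradiction using the global H\"older estimate (Proposition \ref{p:holder}) plus Arzel\`a--Ascoli to extract a limit $u_\infty$ vanishing on the flat boundary, and transfer the expansion back to $u_n$ via uniform convergence. The only cosmetic differences are your concrete choice $\alpha_1=\alpha/2$ (the paper leaves $\alpha_1$ arbitrary in $(0,\alpha)$) and the aside about normalizing $L$, which the paper leaves implicit.
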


\begin{proof}
 We can take $\alpha_1$ to be any positive number smaller than the exponent $\alpha$ from Theorem~\ref{t:flatboundarysstar}. We choose $C_1$ to be the constant $C$ from that theorem.

In Theorem \ref{t:flatboundarysstar} we proved that if $\delta=0$ then we can get a constant $A$ (bounded by $C_1$) such that
\[ |u(x) - A x_d| \leq C_1 |x|^{1+\alpha} \text{ for all } x \in B_1^+. \]
In particular, if we choose $\gamma_1$ so small  that $C_1 \gamma_1^{\alpha-\alpha_1} < 1/2$, we  have, if $\delta=0$,
\beeq\label{loc44} |u(x) - A x_d| \leq \frac 12 \gamma^{1+\alpha_1} \text{ for all } \gamma\in (0,\gamma_1),\;x \in B_\gamma^+. \eeq

Now, let us assume that the result we want to prove is false for the choice of $\gamma_1$, $\alpha_1$ and $C_1$ that we already made. This means that there exists $\gamma\in (0,\gamma_1)$ and  sequences  $u_k\in C(\overline{B_1^+})$ and $\delta_k \to 0$ such that each $u_k$ is a solution of \re{princineq}, with
\[
\|u_k\|_{L^\infty(B_1^+)}\le 1\qquad \mbox{and}\qquad \|u_k\|_{L^\infty(B_1^0)}\le \delta_k,
\]
and for each $A\in (-C_1,C_1)$ the second inequality in \re{loc3} is false for $u_k$ in $B_\gamma^+$.

We now apply the global estimate contained in Proposition \ref{p:holder}, and deduce that for each $\varepsilon>0$ there exist $\delta>0$ and $N$ such that $x,y\in \overline{B_{3/4}^+}$, $|x-y|<\delta$, and $k\ge N$ imply $|u_k(x)-u_k(y)|<\varepsilon$. This is enough to apply the Arzela-Ascoli theorem (or more precisely its proof), and conclude that we can extract a subsequence of $u_k$ which converges uniformly in $B_{3/4}^+$. Let $u_\infty$ be the limit of this subsequence. By the stability properties of viscosity solutions this limit function $u_\infty$ satisfies \re{princineq}  in $B_{3/4}^+$ and vanishes on   $B_{3/4}^0$.

Therefore \re{loc44} holds for $u_\infty$, there exists a bounded constant $A$, $|A|\le C_1$, such that
\[ |u_\infty(x) - A x_d| \leq \frac 12 \gamma^{1+\alpha_1} \text{ for all } x \in B_\gamma. \]
But $u_k\to u_\infty$ uniformly in $B_{2/3}^+\supseteq B_\gamma^+$. In particular $|u_k - u_\infty| \leq \gamma^{1+\alpha_1}/2$ for $k$ sufficiently large. Thus
\[ |u_k(x) - A x_d| \leq \gamma^{1+\alpha_1} \text{ for all } x \in B_\gamma, \]
and we arrive to a contradiction.
\end{proof}

\begin{thm} \label{t:c1alpha-dirichlet}
Let $u$ be a viscosity solution to \re{princineq} in $B_1^+$ such that the restriction of $u$ to the flat boundary $g=u|_{B_1^0}\in C^{1,\overline\alpha}(B_1^0)$, for some $\overline\alpha>0$.
 Then there exists a function $A \in C^{\alpha}(B_{1/2}^0)$ such that for all $x=(x',x_d) \in B_1^+$ and all $x_0=(x'_0,0)\in B_{1/2}^0$,
\begin{equation} \label{e:targetC1b}  |u(x) - \grad_{x'} g(x_0)\cdot (x'-x'_0)  - A(x_0)x_d| \leq C(\|u\|_{L^\infty(B_1^+)}+L+\|g\|_{C^{1+\alpha}(B_1^0)}) |x-x_0|^{1+\alpha},
\end{equation}
\begin{equation} \label{e:targetC1b0}
\|A\|_{C^{\alpha}(B_{1/2}^0)} \leq C(\|u\|_{L^\infty(B_1^+)}+L+\|g\|_{C^{1+\alpha}(B_1^0)}).
\end{equation}
As usual, $\alpha=\alpha(d,\lambda,\Lambda)\in (0,\overline\alpha)$, and $C$ depends on $d,\lambda,\Lambda,K$.
\end{thm}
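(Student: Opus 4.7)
The plan is a Caffarelli-style iteration using Lemma~\ref{l:boundaryperturbation} as the improvement-of-flatness step; this workaround is necessary because solutions of $(S^*)$ do not in general split into a zero-boundary-data part plus a homogeneous solution (cf.\ the open problem in the introduction). First I would divide through by $W$ and apply the rescaling \re{generic} so that $\|u\|_{L^\infty(B_1^+)}\le 1$, $\|g\|_{C^{1,\overline\alpha}}\le 1$, and the constants $K,L$ are as small as needed. Fix $x_0\in B^0_{1/2}$; by translation take $x_0=0$, and subtract the tangential first-order Taylor polynomial of $g$ at $0$,
\[
\tilde u(x):=u(x)-g(0)-\grad_{x'}g(0)\cdot x'.
\]
Since $D^2\tilde u=D^2u$ and the gradient is shifted by a constant of size $\le\|g\|_{C^1}\le1$, $\tilde u$ still satisfies $(S^*)$ with a controlled right-hand side, and its trace obeys $|\tilde u(x',0)|\le\|g\|_{C^{1,\overline\alpha}}|x'|^{1+\overline\alpha}$.

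Fix $\alpha\in(0,\min(\alpha_1,\overline\alpha))$ where $\alpha_1$ is the exponent of Lemma~\ref{l:boundaryperturbation}, pick $\gamma\in(0,\gamma_1)$ and let $\delta=\delta(\gamma)$ be the threshold produced by the lemma. Set $r_k=\gamma^k$. The goal is to construct, by induction on $k$, numbers $A_k$ with $A_0=0$ satisfying
\[
|\tilde u(x)-A_kx_d|\le Mr_k^{1+\alpha}\ \text{in }B_{r_k}^+,\qquad|A_{k+1}-A_k|\le C_1Mr_k^\alpha,
\]
where $C_1$ is from Lemma~\ref{l:boundaryperturbation} and $M$ is taken large. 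The base case $k=0$ holds trivially for $M$ large enough. For the inductive step I would rescale
\[
v_k(y):=\frac{\tilde u(r_ky)-A_kr_ky_d}{Mr_k^{1+\alpha}},\qquad y\in B_1,
\]
and verify the three hypotheses of Lemma~\ref{l:boundaryperturbation}: $\|v_k\|_{L^\infty(B_1^+)}\le1$ by induction; $\|v_k\|_{L^\infty(B_1^0)}\le\|g\|_{C^{1,\overline\alpha}}r_k^{\overline\alpha-\alpha}/M\le\delta$, using the margin $\overline\alpha-\alpha>0$ and the choice $M\ge\|g\|_{C^{1,\overline\alpha}}/\delta$; and $v_k$ satisfies $(S^*)$ in $B_1^+$ with scaled constants $Kr_k$ and $r_k^{1-\alpha}(L+K(1+|A_k|))/M$, small thanks to the initial rescaling, the margin $1-\alpha>0$, and the telescoping bound on $|A_k|$. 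The lemma then yields $|A|\le C_1$ with $|v_k(y)-Ay_d|\le\gamma^{1+\alpha_1}\le\gamma^{1+\alpha}$ on $B_\gamma^+$; setting $A_{k+1}:=A_k+AMr_k^\alpha$ and unwinding the rescaling closes the induction on $B_{r_{k+1}}^+$.

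The sequence $(A_k)$ is Cauchy with limit $A(0)$ satisfying $|A(0)|\le CW$, and standard dyadic interpolation turns the family of estimates on $B_{r_k}^+$ into the pointwise expansion
\[
|u(x)-g(0)-\grad_{x'}g(0)\cdot x'-A(0)x_d|\le CW|x|^{1+\alpha}\quad\text{for all }x\in B_1^+,
\]
which is \re{e:targetC1b} at $x_0=0$. Performing this construction at each $x_0\in B^0_{1/2}$ defines $A$ pointwise. For the $C^\alpha$ bound \re{e:targetC1b0}, given $x_0,y_0\in B^0_{1/2}$ with $r=|x_0-y_0|$, I would evaluate the two expansions at the point $x=(x_0',r)$: the expansion at $x_0$ gives $|u(x)-g(x_0)-A(x_0)r|\le CWr^{1+\alpha}$, while the expansion at $y_0$ combined with the $C^{1,\overline\alpha}$ Taylor remainder $|g(x_0)-g(y_0)-\grad g(y_0)\cdot(x_0'-y_0')|\le Wr^{1+\overline\alpha}$ gives $|u(x)-g(x_0)-A(y_0)r|\le CWr^{1+\alpha}$; subtracting and dividing by $r$ yields $|A(x_0)-A(y_0)|\le CWr^\alpha$.

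The technically delicate point is the inductive step, specifically the verification that the three hypotheses of Lemma~\ref{l:boundaryperturbation} persist under rescaling at every scale. The amplification factor $r_k^{-(1+\alpha)}$ acting on the trace $\tilde u(\cdot,0)$ is exactly matched by the decay $|x'|^{1+\overline\alpha}$ inherited from the smoothness of $g$, and the positive margin $\overline\alpha-\alpha$ is what makes the scheme stable; this is also the reason the exponent $\alpha$ in the conclusion is forced to be strictly smaller than $\overline\alpha$.
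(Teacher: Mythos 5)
Your proposal is correct and takes essentially the same approach as the paper's proof: a Caffarelli-style iteration using Lemma~\ref{l:boundaryperturbation} as the improvement-of-flatness step, with the subtracted tangential affine plane and the margin $\overline\alpha-\alpha>0$ ensuring the smallness hypotheses of the lemma persist at every scale. The only cosmetic differences are that the paper phrases the inductive estimate as an oscillation bound on $u-a_kx_d$ rather than a pointwise bound on $\tilde u-A_kx_d$, and defers the H\"older continuity of $A$ to the argument of Theorem~\ref{t:flatboundarysstar} rather than spelling it out as you do.
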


\begin{proof}
Repeating the argument in the proof of Theorem \ref{t:flatboundarysstar}, we see it is enough to prove the result with $x_0=0$, that is,   there exist $A \in \R$ such that for all $x \in B_1^+$,
\begin{equation} \label{e:targetC1c}  |u(x) - \grad_{x'} g(0) \cdot x'  - Ax_d| \leq C(\|u\|_{L^\infty(B_1^+)}+L+\|g\|_{C^{1+\alpha}(B_1^0)}) |x|^{1+\alpha},
\end{equation}
for some universal $C$, and $|A| \leq C(\|u\|_{L^\infty(B_1^+)}+L+\|g\|_{C^{1+\alpha}(B_1^0)})$.

 Again, we are going to build an iteration process accounting for the difference at diadic scales between $u$ and an approximate solution.

By subtracting a suitable plane at the
origin (adding to $L$ the supremum of $|u|+|\grad g|$, if necessary), we can suppose that $u(0) = g(0) = 0$ and $\grad_{x'}
g(0) = 0$.

Set $$M = 2\norm{u}_{L^\infty(B_1^+)} + \frac{1}{\delta}\norm{g}_{C^{1,\alpha}(B_1^0)} + L.$$ Here $\delta$ is the constant from Lemma \ref{l:boundaryperturbation} with  a value of $\gamma$ which will be specified below.

We will  show that there are constants $\alpha >0$ (small), $\gamma>0$ (small) and $C_1>0$ (large) to be chosen below (depending only on $d$, $\lambda$, $\Lambda$ and $K$), such that we can construct a sequence of real numbers $a_k$ with
\begin{align} \label{dtar1}
\osc_{B_{\gamma^k}^+} (u(x) - a_k \cdot x_d) &\leq M r_k^{1 + \alpha} \\ \label{dbforA1} \abs{a_{k+1} - a_k} &\leq C_1
M r_k^\alpha,
\end{align}
where we have set $r_k=\gamma^k$.

We choose $a_0=0$, hence \eqref{dtar1} holds for $k=0$. We will construct the other values of $a_k$ inductively.
Let us assume that we already have a
sequence $a_k$ so that \eqref{dtar1} holds for $k=0,1,\dots,k_0$; we
have to show that there is a real number $a_{k_0+1}$ such that
\eqref{dtar1} holds for $k = k_0+1$.

Let (we write $k$ instead of $k_0$)
\[ u_k(x) = M^{-1} r_k^{-(1+\alpha)} [ u(r_k x) - a_k r_k x_d]. \]
This scaling means precisely that the \re{dtar1} is equivalent to  $\osc_{B_1^+} u_k \leq 1$. In addition, it is easy to see that \re{princineq} transforms into
\begin{align*}
M^+(D^2 u_k) + K r_k |D u_k| + K M^{-1} r_k^{1-\alpha} |a_k| + LM^{-1} r_k^{1-\alpha} \geq 0 \qquad &\text{ in } B_1^+, \\
M^-(D^2 u_k) - K r_k |D u_k| - K M^{-1} r_k^{1-\alpha} |a_k| -  LM^{-1}r_k^{1-\alpha} \leq 0 \qquad &\text{ in } B_1^+,
\end{align*}

Since $M \geq L$ and $\gamma < 1$, we have that the last term in these inequalities $L\gamma^{k(1-\alpha)} M^{-1} \leq 1$.
Next, note that
$$
|a_k| \leq \sum_{k=0}^{k-1} |a_{j+1} - a_j|\le C_1 M\sum_{k=0}^{\infty}(\gamma^\alpha)^k=\frac{C_1 M}{1-\gamma^\alpha},
$$
by using $a_0=0$ and the inductive hypothesis $\abs{a_{j+1} - a_j} \leq C_1 M \gamma^{k \alpha}$ for all $j<k$. Hence the third terms in the above differential inequalities satisfy, for all $k\ge1$
$$
 K M^{-1} r_k^{1-\alpha} |a_k| \leq \frac{KC_1\gamma^{1-\alpha}}{1-\gamma^\alpha}.
 $$
At this point we choose $\gamma\in (0,\gamma_1)$ such that
\beeq\label{loc4}
\frac{KC_1\gamma^{1-\alpha}}{1-\gamma^\alpha}<1\eeq
and deduce that $u_k$ satisfies \re{princineq} in $B_1^+$, with $L=2$.

Further, on the flat boundary we clearly  have
$$
u_k(x) =  M^{-1} r_k^{-(1+\alpha)} g(r_k x)  \qquad \text{ on } B_1^0.
$$
Since $M \geq \|g\|_{C^{1,\alpha}} / \delta$ and $g(0) = |\grad g(0)| = 0$, this implies \[\|u_k \|_{L^\infty(B_1^0)} \leq \delta.\]

Therefore we can apply
 Lemma \ref{l:boundaryperturbation} to $u_k$, and obtain that there are $C_1$ and $\alpha>0$ (this is where $C_1$ and $\alpha$ are chosen), as well as a constant $\tilde a_k$ such that $|\tilde a_k| \leq C_1$, and
\begin{equation}\label{loc15}
 |u_k(x) - \tilde a_k x_d| \leq \gamma^{1+\alpha} \text{ in } B_\gamma^+.
 \end{equation}
Note that in Lemma \ref{l:boundaryperturbation}, we can choose $\gamma$ arbitrarily small without affecting the choice of constants $\alpha$ and $C_1$, but modifying $\delta$ accordingly. So, we fix $\gamma>0$ and $\delta>0$ so that both Lemma \ref{l:boundaryperturbation} and \re{loc4} are satisfied.

We  set $a_{k+1} = a_k + M r_k^\alpha \tilde a_k$. Recall that $u(x) = a_k x_d + r_k^{1+\alpha} u_k(x/r_k)$ if $x\in B_{r_k}$. Therefore for all $x\in B_{r_k}$ we have
 $$
 u(x) - a_{k+1} x_d = Mr_k^{1+\alpha}\left( u_k(x/r_k) -\tilde a_k x_d/r_k\right).
 $$
 The last quantity is smaller than $M(r_k\gamma)^{1+\alpha}=Mr_{k+1}^{1+\alpha}$ if $x\in B_{r_{k+1}}$, by \re{loc15}. This finishes the inductive construction.

Let $A = \lim_{k \to \infty} a_k$. We claim that
\beeq\label{loc7}
\abs{u(x) - A  x_d} \leq C M \abs{x}^{1+\alpha}.
\eeq
Indeed, from \eqref{dtar1}, \eqref{dbforA1} and $\abs{a_k - A}\leq \sum_{j=k}^\infty \abs{a_j - a_{j+1}}$ we get
\begin{align}
\osc_{B_{r_k}} (u(x) - A  x_d) &\leq
\osc_{B_{r_k}} (u(x) - a_k  x_d) + \gamma^k
\abs{a_k - A} \\ &\leq M \gamma^{k  (1 + \alpha)}
+ C_1 M \gamma^k \sum_{j=k}^\infty \gamma^{\alpha j}  \\ &\leq M
\gamma^{k  (1 + \alpha)} + C_1 M \gamma^{(1+\alpha)k}
\frac{1}{1-\gamma^\alpha} \\ &\leq C M \gamma^{k \cdot (1 +
\alpha)}
\end{align}
We easily obtain \eqref{loc7} by taking $k$ such that $\gamma^{k+1} < |x| \leq \gamma^k$ and appying the last inequality.
\end{proof}

Theorem \ref{t:boundaryharnack} is a direct consequence of Theorem \ref{t:c1alpha-dirichlet}, taking $G(x_0) = (D_{x^\prime}g(x_0), A(x_0))$.

\section{$C^{2,\alpha}$ regularity on the boundary for fully nonlinear equations} 

We will first prove Theorem \ref{t:boundaryfullynonlinear} in the particular case of an autonomous equation and a solution which vanishes on a flat part of the boundary.  The general case will be obtained later by an iterative perturbative procedure.

\begin{lemma} \label{l:lemmaboundaryfullynonlinear}
Let $u$ be a viscosity solution to the equation
\[ \begin{aligned}
F(D^2 u, Du) &= 0 \text{ in } B_1^+,\\
u &= 0 \text{ on } B_1^0,
\end{aligned}
\]
and $F$ satisfies (H1). Then there is a H\"older continuous function $H:B_{1/2}^0 \to \R^{d \times d}$ such that $F(H, Du)=0$ on $B_{1/2}^0$ and for every  $x \in B_1^+$, $x_0 \in B_{1/2}^0$,
\begin{equation} \label{e:t1}  |u(x) - D u(x_0)\cdot (x-x_0) - \frac 12 H(x_0) (x-x_0) \cdot (x-x_0)| \leq C |x-x_0|^{2+\alpha} \|u\|_{L^\infty(B_1^+)} .
\end{equation}
 In addition
\[ \begin{aligned}
\|H\|_{C^\alpha(B_{1/2}^0)} \leq  C \|u\|_{L^\infty(B_1^+)}.
 \end{aligned}
 \]
Here $\alpha=\alpha(d,\lambda,\Lambda)$ and $C$ depends on $d,\lambda,\Lambda$,  and $K$.
\end{lemma}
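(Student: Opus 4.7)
The plan is to apply the flat-boundary $C^{1,\alpha}$-estimate of Theorem \ref{t:c1alpha-dirichlet} twice in succession: once to each tangential derivative of $u$, and then once more to the normal derivative $\partial_d u$. The key fact enabling this is that since $F$ does not depend on $x$, every partial derivative of $u$ belongs to the class $(S^*)$ with $L=0$. Indeed, finite differences $(u(\cdot+he_j)-u(\cdot))/h$ satisfy $(S^*)$ with $L=0$ by (H1), and passage to the limit $h\to 0$ is justified by the interior plus boundary $C^{1,\alpha}$-regularity of $u$ furnished by Theorem \ref{t:regulglobal}(a), together with stability of viscosity solutions under uniform convergence.

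First I would treat the tangential directions. For each $i<d$, the function $\partial_i u$ lies in $(S^*)$ and vanishes on $B_1^0$, so Theorem \ref{t:flatboundarysstar} (which is Theorem \ref{t:c1alpha-dirichlet} for $g=0$) produces $A_i\in C^\alpha(B_{1/2}^0)$ with $\|A_i\|_{C^\alpha(B_{1/2}^0)}\le C\|u\|_{L^\infty(B_1^+)}$ and
\[
|\partial_i u(x)-A_i(y_0)\,x_d|\le C\|u\|_{L^\infty(B_1^+)}|x-y_0|^{1+\alpha}\qquad\text{for }y_0\in B_{1/2}^0,\;x\in B_1^+.
\]
I expect the main obstacle to be the next step: upgrading this to $C^{1,\alpha}$-regularity on the boundary of $B(y_0):=\partial_d u(y_0)$, with tangential gradient $A:=(A_1,\ldots,A_{d-1})$. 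This is the viscosity-solution analogue of the classical identity $\partial_d\partial_i u=\partial_i\partial_d u$, which has no direct meaning here and is precisely the subtle point discussed in the introduction.

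To bypass this issue I would compute, for $x_0,y_0\in B_{1/2}^0$ and $h>0$ small,
\[
\frac{u(x_0',h)-u(y_0',h)}{h}=\frac{1}{h}\int_0^1\sum_{i<d}\partial_i u\bigl(y_0'+t(x_0'-y_0'),h\bigr)(x_0'-y_0')_i\,dt,
\]
and substitute into each $\partial_i u$ its $C^{1,\alpha}$-expansion, but centered at the \emph{moving} base-point $\bigl(y_0'+t(x_0'-y_0'),0\bigr)$ lying directly below the integration point. The pointwise $C^{1,\alpha}$-remainder then contributes only $O(h^{1+\alpha})$ (the displacement to the base-point is purely vertical of length $h$), while the $C^\alpha$-continuity of $A_i$ between the moving base-point and $y_0$ contributes $O(h|x_0-y_0|^\alpha)$. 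After integrating in $t$, dividing by $h$, and using the $C^{1,\alpha}$-expansion of $u$ itself at $x_0$ and $y_0$ to replace $u(x_0',h)/h$ and $u(y_0',h)/h$ by $B(x_0)$ and $B(y_0)$ up to error $O(h^\alpha)$, one obtains
\[
|B(x_0)-B(y_0)-A(y_0)\cdot(x_0'-y_0')|\le C\|u\|_{L^\infty(B_1^+)}\bigl(|x_0-y_0|^{1+\alpha}+h^\alpha\bigr).
\]
Letting $h\to 0^+$ removes the $h^\alpha$ term and establishes $B\in C^{1,\alpha}(B_{1/2}^0)$ with tangential gradient $A$.

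Finally I would apply Theorem \ref{t:c1alpha-dirichlet} once more, this time to $\partial_d u\in(S^*)$ with boundary data $B\in C^{1,\alpha}(B_{1/2}^0)$, to obtain $A_d^*\in C^\alpha(B_{1/2}^0)$ and the expansion
\[
|\partial_d u(x)-B(y_0)-A(y_0)\cdot(x'-y_0')-A_d^*(y_0)\,x_d|\le C\|u\|_{L^\infty(B_1^+)}|x-y_0|^{1+\alpha}.
\]
Defining $H(y_0)$ as the symmetric matrix whose only non-zero entries are $H_{id}(y_0)=H_{di}(y_0)=A_i(y_0)$ for $i<d$ and $H_{dd}(y_0)=A_d^*(y_0)$, direct integration of $\nabla u$ along the straight segment from $y_0$ to $x$, combining both expansions, produces exactly the desired bound \eqref{e:t1}, and the $C^\alpha$-bound on $H$ is inherited from those of its entries. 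To verify $F(H(y_0),\nabla u(y_0))=0$, I would blow up around $y_0$ via $\tilde u_r(x):=r^{-2}\bigl(u(y_0+rx)-r\nabla u(y_0)\cdot x\bigr)$: the proved expansion gives $\tilde u_r\to\tfrac12 H(y_0)x\cdot x$ locally uniformly on $\{x_d>0\}$, while $\tilde u_r$ satisfies $F(D^2\tilde u_r,\,rD\tilde u_r+\nabla u(y_0))=0$; stability of viscosity solutions under uniform convergence passes the equation to the limit and yields $F(H(y_0),\nabla u(y_0))=0$.
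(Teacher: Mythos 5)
Your proposal is correct and gives a genuinely different argument for the second half of the lemma, although the first half (the tangential derivatives and the boundary regularity of $u_d$) runs along essentially the same tracks as the paper.

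You and the paper both start by applying Theorem~\ref{t:boundaryharnack} with $g=0$ to the tangential derivatives $\partial_i u$ ($i<d$), and both then prove the crucial ``Claim'' that the trace $B=u_d|_{B^0}$ is $C^{1,\alpha}$ with tangential gradient $A=(A_1,\dots,A_{d-1})$ --- this is precisely the viscosity substitute for the unavailable identity $\partial_i\partial_d u=\partial_d\partial_i u$ that is flagged in the Introduction. Your mechanism is a slightly different calculation than the paper's: you integrate $\partial_i u$ along a horizontal segment at height $h$, plug in the $C^{1,\alpha}$-expansion at the moving base-point below each integration point, and then let $h\to 0$; the paper instead estimates $u(ke_d+h\tau)-u(ke_d)$ twice, once tangentially and once normally, both by the mean value theorem, then lets $k\to0$. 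Both yield $B\in C^{1,\alpha}$ with $\partial_\tau B=H_{d\tau}$; yours is perhaps more transparent because a single letting-to-zero step is needed.

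The genuine divergence comes afterward. You apply Theorem~\ref{t:c1alpha-dirichlet} to $\partial_d u$ with (now-established) nonzero $C^{1,\alpha}$-boundary data $B$, extract $A_d^*$ as $H_{dd}$, and recover the equation $F(H,Du)=0$ a posteriori by blowing up. The paper does the opposite: it \emph{defines} $H_{dd}$ as the unique root of $F(H(x_0),Du(x_0))=0$, and then establishes the quadratic expansion through a direct barrier construction (the auxiliary function $w$ and the comparison principle in the box $Q_r$, with the final contradiction via the strong maximum principle / Hopf lemma). The Introduction explicitly presents your route (``applying the $C^{1,\alpha}$ estimates to the normal derivative $\partial_d u$'') as the well-known idea, and the paper's barrier proof as the alternative that only needs Theorem~\ref{t:boundaryharnack} with $g=0$. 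The trade-off is real: your approach is more modular and shorter, since it reuses Theorem~\ref{t:c1alpha-dirichlet} as a black box; but it thereby imports the full machinery of nonzero boundary data (in particular the compactness-based Lemma~\ref{l:boundaryperturbation}), while the paper's barrier argument avoids that and is therefore more self-contained. A minor consequence for bookkeeping: in your version the final H\"older exponent is one notch smaller than the one from Theorem~\ref{t:flatboundarysstar}, since Theorem~\ref{t:c1alpha-dirichlet} returns an $\alpha<\overline\alpha$; since the exponent is allowed to depend only on $d,\lambda,\Lambda$, this is harmless. Finally, your blow-up verification of $F(H(y_0),Du(y_0))=0$ is a perfectly legitimate use of viscosity stability; it is simply unnecessary in the paper's proof because $H_{dd}$ is defined via the equation there, so there is nothing to verify.
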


\begin{proof} Recall that $u\in C^{1,\alpha}(B_{3/4}^0)$ for some $\alpha>0$, see Theorem \ref{t:regulglobal}.
Without loss of generality, we assume that $\|u\|_{L^\infty(B_1^+)} = 1$ (if not, set $a=\|u\|_{L^\infty(B_1^+)}$ and replace $u$ by $u/a$ and $F(M,p)$ by $(1/a)F(aM,ap)$).

For each $i\in \{1,\ldots, d-1\}$, by (H1)  the incremental quotient $v_h (x)=\frac{1}{h}( u(x + he_i) - u(x))$ satisfies in $B_{1-h}^+$ the inequalities of Theorem \ref{t:boundaryharnack}, with $L=0$. Since viscosity solutions are stable with respect to uniform convergence, the partial derivative $u_i=\partial_i u$ is also a solution of the same inequalities.
Since $u \equiv 0$ on $B_1^0$ and $B_1^0$ is flat, we have $\partial_i u \equiv 0$ on $B_1^0$.

 Thus, by applying Theorem \ref{t:boundaryharnack} to $\partial_i u$, for each $i = 1, \dots, d-1$ and $x_0 \in B_{3/4}^0$ we obtain a quantity $G_i(x_0)$ which is a H\"older continuous function on $B_{3/4}^0$, and
\begin{equation} \label{e:Mid}  |\partial_i u(x) - G_i(x_0) \cdot (x-x_0)| \leq C |x-x_0|^{1+\alpha}, \qquad x\in B_1^+.
\end{equation}

We now define $$H_{ij}(x_0) = 0\;\mbox{ for }i,j = 1, \dots, d-1,\qquad\mbox{and}\qquad H_{di}(x_0):=G_i(x_0)\;\mbox{ for }i = 1, \dots, d-1.$$  Note that by definition $H_{di}(x_0)$ represents $\partial_d \partial_i u(x_0)$. Since  $u$ is not necessarily a $C^2$ function in a neighborhood of $B_1^0$, we cannot commute the partial derivatives to conclude that $\partial_i \partial_d u$ is H\"older continuous on $B_{3/4}^0$ (these quantities are not even partial derivatives in the classical sense).

We need to justify that \eqref{e:Mid} implies that $u_d=\partial_d u$ is $C^{1,\alpha}$ on $B_{3/4}^0$, and that its tangential derivatives coincide with $H_{di}$. This is the content of the following claim. \medskip

\noindent {\it Claim}.  The restriction of the normal derivative $u_d$   to $B_{3/4}^0$ is a $C^{1,\alpha}$ function, and  $\partial_i \partial_d u = H_{di}$ on $B_{3/4}^0$, for each $i = 1, \dots, d-1$.
\medskip

\noindent {\it Proof}.  Without loss of generality, we prove  that $u_d$ is $C^{1,\alpha}$ at the origin.
Let $\tau$ be a tangential unit vector, say $\tau=e_i$ for some $i=1,\dots,d-1$. Given two small positive numbers $h$ and $k$, we are going to estimate the difference $u(ke_d + h\tau) - u(ke_d)$ in two different ways.

 On one hand,
\begin{align*}
u(k e_d + h\tau) - u(ke_d) &= h \ u_\tau(k e_d + \xi \tau) && \text{by the MVT, for some $\xi \in (0,h)$,}\\
&\leq h \left( k H_{di}(\xi \tau) + Ck^{1+\alpha} \right) && \text{using \eqref{e:Mid}}, \\
&\leq h k H_{di}(0) + C k h^{1+\alpha} + Chk^{1+\alpha}, && \text{using that $H_{di}$ is in $C^\alpha$.}
\end{align*}

On the other hand, we can also estimate that difference by using the mean value theorem with respect to the normal derivative. For some $\xi_1, \xi_2 \in (0,k)$ we have
\begin{align*}
u(k e_d + h\tau) - u(ke_d) &= k u_d(\xi_1 e_d+h\tau) - k u_d(\xi_2 e_d) && \text{using that $u\in C^1$ and  $u=0$ on $B_1^0$,}\\
&\geq k u_d(h\tau) - k u_d(0) - C k^{1+\alpha}&& \text{using that $u_d \in C^\alpha$.}
\end{align*}
Combining the two estimates above, and dividing by $k$, we obtain
\[ u_d(h\tau) - u_d(0) \leq Ck^\alpha + h H_{di}(0) + C h^{1+\alpha} + Ck^\alpha h.\]
Since the left hand side of this inequality is independent of $k$, we can let $k\to 0$, to obtain
\[ u_d(h\tau) - u_d(0) \leq h H_{di}(0) + C h^{1+\alpha}.\]
The inequality $u_d(h\tau) - u_d(0) \geq h H_{di}(0) - C h^{1+\alpha}$ follows analogously (switching the inequalities and the sign of all error terms above). Therefore
\[ |u_d(h\tau) - u_d(0) - h H_{di}(0)| \leq C h^{1+\alpha}.\]
This means literally that $u_d \in C^{1,\alpha}(B_{3/4}^0)$ and $\partial_\tau u_d = H_{di}$ on $B_{3/4}^0$. The claim is proved.
\medskip

We thus define $H_{id}(x_0) := H_{di}(x_0)$, for all $x_0\in B_{3/4}^0$, and all $i = 1, \dots, d-1$.
At this point we can finish the construction of $H$.  We  define $H_{dd}(x_0)$ as the unique real number for which $F(H(x_0), Du(x_0))=0$ (recall $F(M,p)$ is strictly increasing in the matrix $M$). Since $F$ is Lipschitz and $H_{ij} \in C^\alpha$ for $i<d$ or $j<d$, it is obvious that  $H_{dd} \in C^\alpha(B_{3/4}^0)$.

It remains  to show \eqref{e:t1}. Without loss of generality, we will show that this inequality is valid for $x_0=0$.

We start by estimating $u(x',t)-u(0,t)$ for any $t>0$. In the following repeated indexes denote summation for $i = 1,\dots,d-1$.
\[ \begin{aligned}
u(x',t) - u(0,t) &= x_i \partial_i u(\xi,t) &&\text{by MVT, for some $|\xi|<|x'|$},\\
&\geq x_i \partial_i u(\xi,0) + H_{id}(\xi) x_i t - C_1 (t^{1+\alpha} |x'|) \\
&\geq H_{id}(0) x_i t - C_1 t |x'| (|x'|^{\alpha} + t^\alpha),
\end{aligned}\]
where we used $\partial_i u=0$ on $B_{1}^0$, \re{e:Mid} and the H\"older continuity of $H$ on the flat boundary.

Now, let us assume in order to arrive to a contradiction that for some $r>0$
\begin{equation}\label{contrad}
u(0,r) - u_d(0)r - \frac 12 H_{dd} r^2= \pm C_0 r^{2+\alpha},
 \end{equation} where  $C_0$ is a large constant to be chosen below. Say we have  plus sign in \re{contrad} (the minus sign is treated analogously). We construct the auxiliary function
\[\begin{aligned}
w(x) &= Du(0)\cdot x + \frac 12 H(0)x\cdot x + C_0 r^\alpha x_d^2- 2C_1r^\alpha |x|^2\\
&=u_d(0) x_d + H_{id}(0) x_i x_d + \frac 12 H_{dd}(0) x_d^2 + (C_0 - 2C_1) r^\alpha x_d^2 - 2C_1 r^\alpha |x'|^2,
\end{aligned}\]
where $C_1$ is the constant from the inequality on $u(x',t)-u(0,t)$, above.

Note that by \re{contrad}
\begin{equation}\label{loc16}
u(0,r) - w(0,r) = 2C_1r^{2+\alpha}.
\end{equation}

For $r$ sufficiently small, $w(x)$ is a subsolution of $F(D^2w,Dw)\ge0$ in the box $Q_r:=[-r,r]^{d-1} \times [0,r]$, provided $C_0$ is chosen sufficiently large. This is so because
\begin{align*}
D^2 w &= H(0) + 2 (C_0-2C_1) r^\alpha (e_d \otimes e_d) - 2 C_1 r^\alpha (e_i \otimes e_i) \\
Dw &= u_d(0) e_d + O(r)\quad\mbox{ in }Q_r,
\end{align*}
and hence (recalling that $F(H(0),u_d(0) e_d )=F(H(0),Du(0))=0$)
\[ F(D^2 w, Dw) \geq F(H(0),Du(0)) + 2r^\alpha M^- \left( (C_0-2C_1) (e_d \otimes e_d) - C_1 (e_i \otimes e_i) \right) - Cr \geq 0,\]
if $(C_0-2C_1) > \lambda (d-1) C_1 / \Lambda$ and $r\in(0,r_0)$, for some sufficiently small $r_0$.

Let \[ k := \max \{ w(0,t) - u(0,t) : t \in [0,r] \}.\]
Note that $k \geq 0$ since $w(0,0) = u(0,0) = 0$.

We will now see that $w \leq u+k$ on the boundary of $Q_r$. Indeed, on the bottom boundary $\{x_d = 0\}$, we have $w \leq 0 = u$. On the top, $\{x_d = r\}$ and $|x'|\le r$, we have, by the definition of $w$, the above estimate on $u(x',t)-u(0,t)$ and \re{loc16} that
\begin{align*}
w(x',r) & - u(x',r) - k \le \left( w(x',r)-w(0,r)\right) - \left( u(x',r) - u(0,r) \right) + \left( w(0,r) - u(0,r) \right)  \\
&\leq \left( H_{id}(0) r x_i - 2C_1 r^\alpha |x'|^2\right) + \left(-H_{id}(0) r x_i + C_1 |x'| r (r^\alpha+|x'|^\alpha) \right) - 2C_1 r^{2+\alpha}\\
&\leq C_1 \left( r |x'| ( r^\alpha + |x'|^{\alpha}) - 2 ( |x'|^2 + r^2) r^{\alpha} \right) \leq 0.
\end{align*}

On the side boundary, $|x'|=r$ and $t\in (0,r)$, we have
\begin{align*}
w(x',t) - u(x',t) - k &= \left( w(x',t)-w(0,t)\right) - \left( u(x',t) - u(0,t) \right) + \left( w(0,t) - u(0,t) - k\right) \\
&\leq \left( H_{id}(0) t x_i - 2 C_1 r^\alpha |x'|^2\right) + \left(-H_{id}(0) t x_i + C_1 |x'| t (t^\alpha+|x'|^\alpha) \right)\\
&= C_1 \left( -2 r^{2+\alpha} + t^{1+\alpha} r + t r^{1+\alpha} \right) \leq 0,
\end{align*}

By the comparison principle, $w \leq u + k$ everywhere in the box $Q_r$. Now, if $k>0$, this means that $w(0,t) = u(0,t) + k$ for some $t \in (0,r)$ -- a contradiction with the strong maximum principle. On the other hand, if $k=0$, we get a contradiction with the Hopf lemma,  since $\partial_d(u-w) = 0$ at the origin.\medskip

Thus, by translating the origin along $B_{2/3}^0$, we have proved that for any $x \in B_{2/3}^+$,
\begin{equation}\label{loc10} | u(x',x_d) - u(x',0) - u_d(x',0) x_d - \frac 12 H_{dd}(x',0) x_d^2 | \leq C x_d^{2+\alpha}.\end{equation}
 We now  use that $Du=u_d \in C^{1,\alpha}(B_{3/4}^0)$ and $H \in C^\alpha(B_{3/4}^0)$, which implies
$$
|u_d(x',0) - u_d(0) -H_{di}(0) x_i|\le C|x|^{1+\alpha},\qquad |H_{dd}(x',0) -H_{dd}(0)|\le C|x|^{\alpha}
$$
Then \re{e:t1} with $x_0=0$ follows from plugging the last two inequalities into \re{loc10}.

Lemma \ref{l:lemmaboundaryfullynonlinear} is proved. \end{proof}

On order to extend Lemma \ref{l:lemmaboundaryfullynonlinear} to the general equation \re{princeq} we will use the following approximation result.

\begin{lemma} \label{l:perturbation}
Assume (H1). Let $u$ be a solution to
\[
\begin{aligned}
F(D^2 u, Du, x) &= 0 \text{ in } B_1^+, \\
u &= 0\text{ on } B^0_1,
\end{aligned}
\]
and $\|u\|_{L^\infty(B_1^+)} \leq 1$. Let $v$ be a solution to
\begin{align*}
F(D^2 v, Dv, 0) &= 0 \text{ in } B_{3/4}^+, \\
v &= u \text{ on }\partial B_{3/4}^+.
\end{align*}
Assume also that for some $\kappa>0$
\[ |F(M , p, x) - F(M,p,0)| < \kappa (1+|p|+|M|).\]
Then there exist  $\gamma=\gamma(d,\lambda,\Lambda)>0$, and $C$ depending on $d,\lambda,\Lambda,K$, such that $$\|u-v\|_{L^\infty(B_{3/4}^+)} \leq C \kappa^\gamma.$$
\end{lemma}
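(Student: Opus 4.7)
The approach proceeds in two stages: a compactness--and--contradiction argument that yields only qualitative smallness, followed by a sup/inf-convolution regularization plus an ABP estimate which extracts the algebraic rate $\kappa^\gamma$. A direct ABP bound on $w=u-v$ is blocked because viscosity theory supplies no $L^p$ control on $D^2 v$, and therefore the natural right-hand side $\kappa(1+|Dv|+|D^2v|)$ is not controlled a priori; sup-convolutions bypass this by producing a pointwise a.e.\ bound of order $\epsilon^{-2}$ on $D^2 v_\epsilon$ at the cost of an $\epsilon$-size perturbation.

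\textbf{Step 1 (qualitative smallness).} First establish: for every $\eta>0$ there exists $\kappa_0(\eta,d,\lambda,\Lambda,K)>0$ such that $\kappa\le \kappa_0$ forces $\|u-v\|_{L^\infty(B_{3/4}^+)}\le \eta$. Argue by contradiction: suppose there are sequences $F_n,u_n,v_n$ satisfying the hypotheses with $\kappa_n\to 0$ but $\|u_n-v_n\|_\infty\ge \eta_0>0$. Proposition~\ref{p:holder} applied to $u_n$ on $B_1^+$ gives uniform $C^{\alpha_0}(\overline{B_{3/4}^+})$-bounds; since $v_n|_{\partial B_{3/4}^+}=u_n|_{\partial B_{3/4}^+}$ is then uniformly H\"older, a second application gives uniform H\"older bounds on $v_n$. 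Arzel\`a--Ascoli yields $u_n\to u_\infty$, $v_n\to v_\infty$ uniformly along a subsequence. The $F_n$ are uniformly Lipschitz in $(M,p)$ by (H1); after further extraction $F_n(\cdot,\cdot,0)\to F_\infty$ locally uniformly, and $\kappa_n\to 0$ together with the perturbation hypothesis forces $F_n\to F_\infty$ locally uniformly in $(M,p,x)$, with $F_\infty$ independent of $x$. Stability of viscosity solutions under simultaneous uniform convergence of operators and solutions shows that both $u_\infty$ and $v_\infty$ are viscosity solutions of the autonomous equation $F_\infty(D^2 w, Dw)=0$ in $B_{3/4}^+$ with the same boundary values on $\partial B_{3/4}^+$; the viscosity comparison principle then forces $u_\infty\equiv v_\infty$, contradicting $\|u_n-v_n\|_\infty\ge\eta_0$.

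\textbf{Step 2 (rate via sup-convolutions).} Introduce
\[
u^\epsilon(x)=\sup_{y}\bigl\{u(y)-|x-y|^2/\epsilon^2\bigr\},\qquad v_\epsilon(x)=\inf_{y}\bigl\{v(y)+|x-y|^2/\epsilon^2\bigr\}.
\]
These are semi-convex/concave and therefore twice differentiable a.e.\ by Aleksandrov, with one-sided Hessian bounds $D^2 u^\epsilon\ge -2\epsilon^{-2} I$ and $D^2 v_\epsilon\le 2\epsilon^{-2} I$. Standard viscosity theory (cf.\ Ch.~5 of \cite{caffarelli1995fully}) gives that $u^\epsilon$ and $v_\epsilon$ satisfy perturbed versions of the respective equations in the viscosity (hence a.e.) sense on an interior subdomain, and uniform ellipticity upgrades the one-sided bound on $D^2 v_\epsilon$ to the two-sided pointwise a.e.\ bound $|D^2 v_\epsilon|\le C\epsilon^{-2}$. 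By the H\"older control of Step~1 and Proposition~\ref{p:holder}, $\|u-u^\epsilon\|_\infty+\|v-v_\epsilon\|_\infty\le C\epsilon^{\beta}$ for some $\beta=\beta(\alpha_0)>0$. Thus $w_\epsilon=u^\epsilon-v_\epsilon$ satisfies, a.e.\ in the interior subdomain, the Pucci inequalities of the class $S^*$ with right-hand side bounded by $C\kappa(1+|Dv_\epsilon|+|D^2 v_\epsilon|)\le C\kappa\epsilon^{-2}$. ABP applied to $w_\epsilon$, with the $O(\epsilon^{\beta})$ boundary discrepancy on $\partial B_{3/4}^+$ absorbed into the boundary term, produces $\|w_\epsilon\|_\infty\le C(\epsilon^{\beta}+\kappa\epsilon^{-2})$. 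Optimizing in $\epsilon$ by choosing $\epsilon=\kappa^{1/(\beta+2)}$ yields $\|u-v\|_\infty\le C\kappa^{\beta/(\beta+2)}$, which is the claimed estimate with $\gamma=\beta/(\beta+2)$.

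\textbf{Main obstacle.} The core technical difficulty is Step~2: one has to pass from the viscosity to the a.e.\ formulation for $v_\epsilon$, combine the one-sided Aleksandrov bound with uniform ellipticity to obtain a two-sided pointwise bound on $D^2 v_\epsilon$, and verify that the boundary discrepancy between $u^\epsilon-v_\epsilon$ and $u-v$ in a layer of width $O(\epsilon^\beta)$ near $\partial B_{3/4}^+$ does not spoil the ABP estimate. Pure compactness (Step~1) gives no rate; the explicit exponent $\gamma$ emerges only from balancing the sup-convolution approximation error against the $O(\epsilon^{-2})$ Hessian bound in Step~2.
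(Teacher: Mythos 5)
Your plan correctly identifies the two essential ingredients — (inf/)sup-convolutions and an ABP comparison — and correctly diagnoses that a naive ABP bound on $u-v$ is blocked by the absence of a bound on $D^2 v$. But the step that is supposed to overcome this is exactly where the argument breaks.

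The central claim of Step 2, that ``uniform ellipticity upgrades the one-sided bound on $D^2 v_\epsilon$ to the two-sided pointwise a.e.\ bound $|D^2 v_\epsilon|\le C\epsilon^{-2}$,'' is false. The inf-convolution $v_\epsilon$ of the \emph{solution} $v$ is only a \emph{supersolution} of the autonomous equation, and semi-concavity gives only the upper bound $D^2 v_\epsilon \le C\epsilon^{-2}I$. The ellipticity upgrade you are invoking is a genuine fact for a function that satisfies \emph{both} inequalities: from $F(N)=0$ and $N\le CI$ one gets $\mathcal{M}^+(N)\ge 0$, hence $\Lambda\sum_{e_i>0}e_i \ge -\lambda\sum_{e_i<0}e_i$, and with the upper bound on the positive part this pins down the negative part. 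But a supersolution only gives $\mathcal{M}^-(D^2 v_\epsilon)\le K|Dv_\epsilon|$, i.e.\ $\lambda\sum_{e_i>0}e_i + \Lambda\sum_{e_i<0}e_i \le K|Dv_\epsilon|$, which bounds the negative eigenvalues from \emph{above} in magnitude from below — the wrong direction. Nothing prevents $v_\epsilon$ from having arbitrarily large negative Hessian eigenvalues at a.e.\ points; $v$ itself is only $C^\alpha$, and the inf-convolution regularizes from one side only. Consequently the right-hand side $C\kappa(1+|Dv_\epsilon|+|D^2 v_\epsilon|)$ that you feed into ABP is not controlled, and the rate $\kappa^\gamma$ does not follow.

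The paper's proof circumvents this precisely where yours gets stuck, and the device is worth noting. It regularizes only $v$ (the solution of the autonomous equation — this avoids the $x$-shift error created by sup-convolving $u$), takes a $C^2$ function $\varphi$ touching $v^\epsilon$ from above at $x$, and, instead of estimating $|D^2\varphi|$ (impossible), \emph{constructs} a matrix $M$ with $M\le D^2\varphi(x)$, $M^-=(D^2\varphi(x))^-$, and $F(M,D\varphi(x),0)=0$. Semi-convexity gives $M^-\le \tfrac2\epsilon I$, and then ellipticity together with $F(M,D\varphi,0)=0$ bounds $\mathrm{tr}\,M^+$, giving $|M|\le C/\epsilon$. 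The crucial chain is
\[
F(D^2\varphi,D\varphi,x)\ \ge\ F(M,D\varphi,x)\ =\ F(M,D\varphi,x)-F(M,D\varphi,0)\ \ge\ -\kappa(1+|D\varphi|+|M|)\ \ge\ -C\kappa/\epsilon,
\]
so the ABP right-hand side depends on $|M|$, not on $|D^2\varphi|$ or $|D^2 v^\epsilon|$. This is the missing idea. (Your Step~1 compactness argument, incidentally, is not used in Step~2 — the H\"older control there comes directly from Proposition~\ref{p:holder} — and the paper omits it entirely.)
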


\begin{proof}
By Proposition \ref{p:holder} the functions $u$ and $v$ are in $C^\alpha(B_{3/4}^+)$ for some $\alpha>0$, with $C^\alpha$-norms bounded by $C\|u\|_{L^\infty(B_1^+)}\le C$. We choose $\gamma = \alpha/2$.

Without restricting the generality (replacing, if necessary, $B_1^+$ and $B_{3/4}^+$ by $B_{r_0}^+$ and $B_{3r_0/4}^+$, for some fixed $r_0$ depending only on $d,\lambda,\Lambda,K$), we can assume that $\|v\|_{C^\alpha(B_{3/4}^+)}\le 1/2$.

By the H\"older regularity we clearly have $|u-v| \leq C \kappa^{\alpha/2}$ in $B_{3/4}^+ \setminus B_{3/4-\kappa^{1/2}}^+$, and also in $B_{3/4}^+ \cap \{x_d \leq \kappa^{1/2}\}$. We are left to prove the estimate in the remaining part of $B_{3/4}^+$, which we will call $D := B_{3/4-\kappa^{1/2}}^+ \cap \{x_d > \kappa^{1/2}\}$.

Now, for a small $\eps>0$, we consider the sup-convolution
\[v^\eps(x) = \max_{y \in \overline{B_{3/4}^+}} \left( v(y) - \frac 1 \eps |x-y|^2 \right).\]
From the elementary properties of sup-convolutions (see \cite{jensen1988maximum}, \cite{jensen1988uniqueness}, \cite[Chapter 5]{caffarelli1995fully}), we have $F(D^2 v^\eps, D v^\eps, 0) \geq 0$ in $D$ in the viscosity sense as long as we can make sure that for any $x \in D$ the maximum in the definition of $v^\eps$ is attained for some $y=x^*$ in the interior of $B_{3/4}^+$. This is true if we choose $\eps = \kappa^{1-\alpha/2}/2$. Indeed, recall that
\begin{equation}\label{loc18}
|x-x^*|\le \left(\eps \mathrm{osc}_H v\right)^{1/2},
 \end{equation}
 where $H$ is the set on which the maximum in the definition of $v^\eps$ is taken (see for instance Lemma 5.2 in \cite{caffarelli1995fully}). Since $v \in C^\alpha$ with a norm smaller than $1/2$, iterating \re{loc18} - first with $H=\bar B_{3/4}^+$, then with $H=\bar B_{3/4}^+\cap \bar B_{\eps^{1/2}}(x)$, then with $H=\bar B_{3/4}^+\cap \bar B_{\eps^{1/2+\alpha/4}}(x)$, etc - we get
 $$
 |x-x^*|\le \eps^{1/(2-\alpha)},\qquad\mbox{by using }\; \frac{1}{2-\alpha}= \frac{1}{2}+\frac{\alpha}{4}\sum_{i=0}^\infty \left(\frac{\alpha}{2}\right)^i.
 $$
Moreover, for all $x\in D$,
$$
v^\eps(x) - v(x) \le v(x^*)-v(x)\le 1/2 |x-x^*|^\alpha\le \eps^{\alpha/(2-\alpha)} \le \kappa^{\alpha/2}.
$$

The function $v^\eps$ is twice differentiable a.e. and semi-convex, with $D^2 v^\eps \geq - \frac 2 \eps I$ and $|D v^\eps| < \frac C \eps$. Let $\varphi$ be a $C^2$ function touching $v^\eps$ from above at a given point $x\in D$. Then clearly we also have that $D^2 \varphi(x) \geq -\frac 2 \eps I$ and $|D \varphi(x)| \leq \frac C \eps$. By the definition of a viscosity solution $F(D^2 \varphi(x) , D \varphi(x), 0)\ge 0$.

Fix a matrix $M$ such that $M \leq D^2 \varphi(x)$, $M^- = (D^2 \varphi(x))^-$ and $F(M,D\varphi(x),0) = 0$. Thus, we have $M^- \leq \frac 2 \eps I$ and from the ellipticity of $F$ it easily follows that $|M| \leq \frac C \eps$. Hence
\[
\begin{aligned}
F(D^2 \varphi(x) , D \varphi(x), x) &\geq F(M , D \varphi(x), x),\\
&= F(M , D \varphi(x), x) - F(M , D \varphi(x), 0), \\
&\geq -\frac C \eps \kappa = - C \kappa^{\alpha/2}.
\end{aligned}
\]

Therefore, we showed that
\begin{align*}
F(D^2 v^\eps , D v^\eps, x) \geq -C \kappa^{\alpha/2} \text{ in } D,\quad\mbox{and}\\
v^\eps \leq v +  \kappa^{\alpha/2}\leq u + C \kappa^{\alpha/2} \text{ on } \partial D.
\end{align*}
From the Alexandrov-Bakelman-Pucci inequality we get that $v \leq v^\eps \leq u + C \kappa^{\alpha/2}$ in $D$.

The inequality in the opposite direction follows similarly.
\end{proof}

\begin{remark}
If we assume that $F$ is convex or concave in the second derivative, then $F(D^2u,Du,0)=0$ would have $C^{1,1}$ solutions and we could prove Lemma \ref{l:perturbation} by using a simpler idea, as in Lemma 7.9 in \cite{caffarelli1995fully}. This lack of regularity of the solutions is compensated with the use of sup-convolutions.

For nonconvex equations, there is a weaker result in \cite{caffarelli1995fully} (Lemma 8.2) which is proved by compactness and thus does not give an algebraic expression for the upper bound of the difference between the two solutions.
\end{remark}

We are now ready to prove Theorem \ref{t:boundaryfullynonlinear} by an iterative argument which makes use of Lemma \ref{l:lemmaboundaryfullynonlinear}.

\begin{proof}[Proof of Theorem \ref{t:boundaryfullynonlinear}]

Without loss of generality, we can assume that the boundary of $\Omega$ is flat. Otherwise we can make a change of variables to flatten the boundary which  preserves the hypotheses on the equation $F$. So we assume that $u$ satisfies the equation in $B_1^+$ and equals zero on $B_1^0$. The latter is obtained by removing from $u$ a $C^{2,\alpha}$-extension of $g$ in $\Omega$.

We are going to show that the statement of Theorem \ref{t:boundaryfullynonlinear} is valid for $x_0=0$. We can assume without loss of generality that the $C^\alpha$ norm of $F$ in $B_1^+$ is less than $\eps_0$, a constant to be chosen. To achieve the latter, just replace $B_1^+$ by $B_{r_0}^+$, for some $r_0$ such that $\overline{C}r_0^{\bar\alpha}<\eps_0$, where $\overline{C}$ is the constant from (H2).

We can also assume that $\|u\|_{L^\infty(\Omega)} + \|f\|_{C^{\overline \alpha}(\Omega)}= 1$ (if not, set $a=\|u\|_{L^\infty(\Omega)}+ \|f\|_{C^{\overline \alpha}}(\Omega)$ and replace $u$ by $u/a$ and $F$ by $(1/a) F(aM,ap,x)$). By Theorem \ref{t:regulglobal} we know that the gradient $Du$ is H\"older continuous up to the boundary, so we can replace $F(M,p,x)$ by $\widetilde{F}(M,x)=F(M, Du(x),x)$ (we will write $F$ instead of $\tilde F$).

We will construct iteratively two sequences $A^k \in \R$ and $H^k \in \S_d$ such that
\begin{align}
|A^k - A^{k+1}| &\leq C r_k^{1+\alpha}, \label{e:ih1}\\
|H^k - H^{k+1}| &\leq C r_k^{\alpha }, \label{e:ih2}\\
|u(x) - A^k x_d - H^k_{ij} x_i x_j| &\leq r_k^{2+\alpha},\quad \text{if }\; |x| \leq r_k,\label{e:ih3}
\end{align}
where $r_k = \rho^k$, for some $\rho \in (0,1)$  to be determined later, depending on the right quantities.
Moreover, along the sequence, we have $H^k_{ij} = 0$ for $i,j = 1,\dots,d-1$. That is, $(Hx,x)=H_{ij}^k x_i x_j = 0$ when $x \in B_1^0$.

For $k=0$ the choice $A^k=0$ and $H^k = 0$ obviously works. Now we assume we have constructed these sequences up to certain value of $k$ and aim to find $A^{k+1}$, $H^{k+1}$.

Note that, by the induction hypothesis,
$$
|H^k|\leq \sum_{i=1}^k |H^i - H^{i-1}|\leq C \sum_{i=1}^\infty (\rho^\alpha )^{k}\leq C,
$$
and similarly for $A_k$.

Let  $P_k(x)= A^k x_d + H^k_{ij} x_i x_j$ and $u_k$ be the rescaled function
\[ u_k(x) = r_k^{-2} u(r_k x) - r_k^{-1} A_k x_d -  H^k_{ij} x_i x_j =  r_k^{-2} (u(r_kx) - P_k(r_kx)),\quad x\in B_1^+,\]
that is, $u(x) = P_k(x) + r_k^2 u_k(x/r_k)$ for $x\in B_{r_k}$.
Then we have  $|u_k|\leq r_k^\alpha$ in $B_1^+$ (by \re{e:ih3}) and
\[  F\left(  D^2 u_k + H^k,  r_k x \right) = f(r_kx)\quad \mbox{ in } \; B_1^+.\]

Let $v_k$ be the solution to the following equation
\begin{align}\label{loc14}
 F\left( D^2 v_k + H^k,  0 \right) &= f(0) \text{ in } B_{3/4}^+,\\
v_k &= u_k \text{ on } \partial B_{3/4}^+.
\end{align}

We use (H2) and apply Lemma \ref{l:perturbation}, to obtain that $|u_k - v_k| \leq C \eps_0^\gamma r_k^{\overline \alpha \gamma }$ in $B_{3/4}^+$.
We take $\alpha$ to be a positive number smaller than $\overline \alpha \gamma$.

Now, by applying Lemma \ref{l:lemmaboundaryfullynonlinear} to \re{loc14} we get that there exists $\hat \alpha>0$, $\tilde A^k$ and $\tilde H^k$ such that for all $x\in B_{3/4}^+$
\[ |v_k(x) - \tilde P_k(x)|=|v_k(x) - \tilde A^k x_d - \tilde H^k_{ij} x_i x_j| \leq C_1  \|v_k\|_{L^\infty} |x|^{2+\hat \alpha}\leq 2 C_1  r_k^\alpha |x|^{2+\hat \alpha} ,\]
where we also used that $|v_k|\le |u_k|+|u_k-v_k|\le Cr_k^\alpha$.

Here we choose $\alpha<\hat\alpha$ and  $\rho$ so that $2C_1 \rho^{\hat \alpha - \alpha} < 1/2$, thus
\[ |v_k(x) - \tilde P_k(x)| \leq r_k^\alpha\frac {\rho^{2+\alpha}} 2, \quad\mbox{ for all }\;x\in B_\rho.\]

We now choose $\eps_0$ so small that
$$
|u_k - v_k| \leq C \eps_0^\gamma r_k^{\overline \alpha \gamma }\leq r_k^\alpha \frac{\rho^{2+\alpha}}{2}\quad\mbox{ for all }\;x\in B_{3/4}.
$$

Finally, we define $P_{k+1}(x) = P_k(x) + r_k^2 \tilde P_k(x/r_k)$, in other words, $A^{k+1} = A^k + r_k \tilde A^k$ and $H^{k+1} = H^k + \tilde H^k$.
Then,  if $|x|\leq r_{k+1} = r_k/\rho$ and $y=x/r_k$ we have
$$
|u(x) - P_{k+1}(x)|=r_k^2|u_k(y)- \tilde P_k(y)|\leq r_k^2(|u_k(y)-v_k(y)| + |v_k(y)- \tilde P_k(y)|)\leq r_{k+1}^{2+\alpha}.
$$

The conditions \eqref{e:ih1} and \eqref{e:ih2} are clearly satisfied for $k+1$ since by Lemma \ref{l:lemmaboundaryfullynonlinear} and the global $C^{1,\alpha}$-estimates we have
$$|\tilde A^k|, |\tilde H^k| \leq C \|v_k\|_{L^\infty} \leq C r_k^\alpha.$$
This finishes the construction of the sequences $A^k$ and $H^k$.

Therefore we can define
$$
P(x) = \lim P_k(x) = \sum_{k=1}^\infty (P_{k+1}- P_k),
$$
since the last series is convergent. In addition, if $x\in B_{r_k}$ we have, by \eqref{e:ih1} and \eqref{e:ih2},
$$
|P(x) - P_k(x)|\le \sum_{j=k}^\infty |P_{j+1}(x)-P_j(x)|\le Cr_k^{2+\alpha}.
$$
 Thus, for each $x\in B_{3/4}^+$ we can fix $k$ such that $r_{k+1}< |x|\le r_k$ and estimate $|u(x) - P(x)|\le |u(x) - P_k(x)|+|P_k(x) - P(x)|\le C r_k^{2+\alpha}$.

Finally, we know that $v_k$ converges uniformly to zero in $B_{3/4}^+$, so \re{loc14} implies that $F(H, 0) = f(0)$, where $H=D^2P$. It only remains to show that the symmetric matrix function $H(x_0)$ which we thus constructed for all $x_0\in B_{1/2}^0$ is H\"older continuous on $B_{1/2}^0$. This is simple to get, since $F(H(x_0), x_0) = f(x_0)$, $F(M,x)$ and $f(x)$ are H\"older continuous in $x$, $F$ is Lipschitz and uniformly elliptic in~$M$, and $H_{ij}(x_0)=0$ for $i,j=1,\dots,d-1$.

The proof of Theorem \ref{t:boundaryfullynonlinear} is finished.
\end{proof}
\medskip

\noindent {\it Proof of Theorem \ref{t:boundaryneighbourhood}}. As in the previous proof, we can assume that $g=0$, the boundary is flat, and we can write $F(M,x)$ instead of $F(M,p,x)$. From Theorem \ref{t:boundaryfullynonlinear} we know that at any point $x_0 \in \partial \Omega$ there exists a second order polynomial $P=P_{x_0}$, which is H\"older continuous in $x_0$ and such that $|u(x) - P(x)| \leq C |x-x_0|^{2+\alpha}$ for some $\alpha > 0$.

Let $x \in \Omega_\delta$. From the definition of $\Omega_\delta$, there exists a point $x_0 \in \partial \Omega$ such that $|x-x_0| = \dist(x,\partial \Omega) < \delta$. Let $r = |x-x_0|/2$. We have that $B_r(x) \subset \Omega$ and $|u(x) - P(x)| \leq C_1 r^{2+\alpha}$ in $B_r(x)$.

In \cite{Sa}, Ovidiu Savin proved that  solutions with sufficiently small oscillation are $C^{2,\alpha}$-smooth. We will use the extensions of this result given in \cite[Proposition 4.1]{ASilS} and \cite[Theorem 1.2]{PT}, which say that if $F(M,x)$ is $C^1$ in $M$ and $|u(x) - P(x)| \leq \eps r^2$ in $B_r(x)$ for sufficiently small $\eps>0$, then $u \in C^{2,\alpha}(B_{r/2}(x))$ (we replace $u$ by $u-P$ and $F(M,x)$ by $F(M+D^2P,x)$). This smallness assumption is satisfied if we choose $\delta$ such that $C_1 \delta^\alpha < \eps$. Hence $u$ is $C^{2,\alpha}$-smooth in the interior of $\Omega_\delta$.

To put together this interior regularity result with the boundary result from Theorem \ref{t:boundaryfullynonlinear} we repeat the proof of Proposition 2.4 in \cite{milakis2006regularity}.
This proves that $u \in C^{2,\alpha}$ in a neighborhood of any point in $\Omega_\delta$. The rest follows by an easy covering argument.

\section*{Acknowledgments}

Luis Silvestre was partially supported by NSF grants DMS-1254332 and DMS-1065979.

\bibliographystyle{plain}
\bibliography{bdary}
\index{Bibliography@\emph{Bibliography}}%

\end{document}